\author{Luca Martinazzi\thanks{The first author was supported by the ETH Research Grant no. ETH-02 08-2.}\\ \small{ETH Zurich} \\ \footnotesize{\texttt{luca@math.ethz.ch}}
\and Mircea Petrache\\ \small{ETH Zurich} \\ \footnotesize{\texttt{mircea.petrache@math.ethz.ch}}}
\title{Asymptotics and quantization for a mean-field equation of higher order}
\newtheorem{trm}{Theorem}
\newtheorem{cor}[trm]{Corollary}
\newtheorem{lemma}[trm]{Lemma}
\newcommand{\R}[1]{\mathbb{R}^{#1}}
\newcommand{\de}{\partial}
\newcommand{\ve}{\varepsilon}
\newcommand{\bs}{\backslash}
\newcommand{\norm}[1]{\left\Arrowvert{#1}\right\Arrowvert}
\newcommand{\abs}[1]{\left\arrowvert{#1}\right\arrowvert}
\newenvironment{proof}{\noindent\emph{Proof.}}{\hfill$\square$\medskip}
\newenvironment{rmk}{\medskip\noindent\emph{Remark.}}{\hfill$\bullet$\medskip}
\DeclareMathOperator{\diver}{div}
\DeclareMathOperator{\loc}{loc}
\DeclareMathOperator*{\dist}{dist}
\begin{document}
\maketitle

\begin{abstract}
\noindent Given a regular bounded domain $\Omega\subset\R{2m}$, we describe the limiting behavior of sequences of solutions to the mean field equation of order $2m$, $m\geq 1$,
$$(-\Delta)^m u=\rho \frac{e^{2mu}}{\int_\Omega e^{2mu}dx}\quad\text{in }\Omega,$$
under the Dirichlet boundary condition and the bound $0<\rho\leq C$. We emphasize the connection with the problem of prescribing the $Q$-curvature.
\end{abstract}

\section{Introduction}
Let $\Omega\subset\R{2m}$ be a bounded domain with smooth boundary. Given a sequence of numbers $\rho_k>0$,
we consider solutions to the mean-field equation of higher order
\begin{equation}\label{eq2}
(-\Delta)^m u_k=\rho_k\frac{e^{2mu_k}}{\int_{\Omega}e^{2mu_k}dx}
\end{equation}
subject to the Dirichlet boundary condition
\begin{equation}\label{dir}
u_k=\partial_\nu u_k=\ldots=\de_\nu^{m-1}u_k=0\quad \textrm{on }\partial \Omega.
\end{equation}
As shown in Corollary 8 of \cite{mar1}, every $u_k$ is smooth. In this paper we study the limiting behavior of the sequence $(u_k)$. We show that concentration-compactness phenomena together with geometric quantization occur. We particularly emphasize the interesting relationship with the thriving problem of prescribing the $Q$-curvature.

For any $\xi\in\overline \Omega$, let $G_\xi(x)$ denote the Green function of the operator $(-\Delta)^m$ on $\Omega$ with Dirichlet boundary condition (see e.g. \cite{ACL}), i.e
\begin{equation}\label{greenf}
\left\{
\begin{array}{ll}
(-\Delta)^m G_\xi=\delta_\xi&\text{in }\Omega\\
G_\xi=\de_\nu G_\xi=\ldots=\de_\nu^{m-1} G_{\xi}=0 & \text{on }\de\Omega.
\end{array}
\right.
\end{equation}
Also fix any $\alpha\in[0,1)$. We then have
\begin{trm}\label{trm2} Let $u_k$ be a sequence of solutions to \eqref{eq2}, \eqref{dir} and assume that
$$0<\rho_k\leq C.$$ Then one of the following is true:
\begin{itemize}
\item[(i)] Up to a subsequence $u_k\to u_0$ in $C^{2m-1,\alpha}(\overline\Omega)$ for some $u_0\in C^\infty(\overline\Omega)$.
\item[(ii)] Up to a subsequence, $\lim_{k\to\infty}\max_\Omega u_k=\infty$ and there is a positive integer $N$ such that
\begin{equation}\label{quant}
\lim_{k\to\infty}\rho_k=N\Lambda_1,\quad \Lambda_1=(2m-1)!|S^{2m}|.
\end{equation}
Moreover there exists a non-empty finite set $S=\{x^{(1)},\ldots, x^{(N)}\}\subset\Omega$
such that 
\begin{equation}\label{green}
u_k\to \Lambda_1\sum_{i=1}^N G_{x^{(i)}}\quad \text{in }C^{2m-1,\alpha}_{\loc}(\overline\Omega\backslash S).
\end{equation}
\end{itemize}
\end{trm}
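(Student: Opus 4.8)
The plan is to run the concentration--compactness machinery for prescribed-curvature type equations, adapted to the polyharmonic operator as in \cite{mar1}, and then upgrade compactness to the stated quantization by a blow-up (``neck'') analysis. Write $f_k:=\rho_k e^{2mu_k}/\int_\Omega e^{2mu_k}dx$, so that $f_k\ge 0$ and $\|f_k\|_{L^1(\Omega)}=\rho_k\le C$; passing to a subsequence, $\rho_k\to\rho_\infty\in[0,C]$ and $f_k\rightharpoonup\mu$ weakly-$*$ as Radon measures on $\overline\Omega$, with $\mu\ge0$ and $\mu(\overline\Omega)=\rho_\infty$. By the $L^1$-theory for $(-\Delta)^m$ (equivalently, the Green representation), the $u_k$ are bounded in $L^q(\Omega)$ for every $q<\infty$. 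Define the concentration set
\[
S:=\Bigl\{x\in\overline\Omega:\ \liminf_{r\to0}\ \limsup_{k\to\infty}\int_{B_r(x)\cap\Omega}f_k\,dy\ \ge\ \ve_0\Bigr\},
\]
where $\ve_0>0$ is the dimensional threshold (tied to the sharp Adams inequality for $(-\Delta)^m$) below which the $\ve$-regularity estimate of \cite{mar1} applies; since $\mu(\overline\Omega)<\infty$, $S$ is finite.

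Fix $x_0\in\overline\Omega\bs S$ and $r>0$ with $\limsup_k\int_{B_{2r}(x_0)\cap\Omega}f_k<\ve_0$. The higher-order Brezis--Merle estimate (using the vanishing Dirichlet data \eqref{dir} when $B_{2r}(x_0)$ meets $\de\Omega$, and the $L^q$-bound above) yields the dichotomy: either $e^{2mu_k}$ is bounded in $L^p(B_r(x_0)\cap\Omega)$ for some $p>1$, or $u_k\to-\infty$ locally uniformly on $\overline\Omega\bs S$; a covering/connectedness argument makes this global. The second alternative is impossible once $S\cap\de\Omega=\emptyset$, since then $\de\Omega\subset\overline\Omega\bs S$ while $u_k\equiv0$ there; and $S\cap\de\Omega=\emptyset$ follows from a Pohozaev identity near $\de\Omega$ together with \eqref{dir}, which forces $\int_{B_r(x_0)\cap\Omega}f_k\to0$ for $x_0\in\de\Omega$. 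Hence $S\subset\Omega$, $f_k$ is bounded in $L^p_{\loc}(\overline\Omega\bs S)$, and by $L^p$-elliptic estimates $u_k$ is bounded in $C^{2m-1,\alpha}_{\loc}(\overline\Omega\bs S)$; a subsequence converges there to some $u_0$, and by \eqref{greenf} and the Green representation $u_0=\int_\Omega G_y\,d\mu(y)$ on $\overline\Omega\bs S$. If $S=\emptyset$ this gives convergence in $C^{2m-1,\alpha}(\overline\Omega)$ and, bootstrapping with \eqref{eq2}, $u_0\in C^\infty(\overline\Omega)$; moreover $\limsup_k\max_\Omega u_k<\infty$ forces $S=\emptyset$, because then $\int_\Omega e^{2mu_k}$ is bounded below (by Jensen's inequality, the $u_k$ being bounded in $L^1$ and above), so $f_k$ is bounded in $L^\infty(\Omega)$. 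This is alternative (i).

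If $S\neq\emptyset$ then $\max_\Omega u_k\to\infty$ (alternative (ii)); write $S=\{x^{(1)},\dots,x^{(N)}\}$, $1\le N<\infty$, and note $\rho_\infty\ge\ve_0>0$. At each $x^{(i)}$ select $x_k^{(i)}\to x^{(i)}$ with $M_k^{(i)}:=u_k(x_k^{(i)})\to\infty$ and set $\eta_k(y):=u_k(x_k^{(i)}+r_k y)-M_k^{(i)}$ with $r_k^{2m}\rho_k e^{2mM_k^{(i)}}/\int_\Omega e^{2mu_k}=1$; the standard blow-up selection gives $r_k\to0$. Then $(-\Delta)^m\eta_k=e^{2m\eta_k}$ on balls exhausting $\R{2m}$, $\eta_k\le\eta_k(0)=0$, and $\int_{B_R}e^{2m\eta_k}=\int_{B_{Rr_k}(x_k^{(i)})}f_k\le C$; by the classification of entire solutions of $(-\Delta)^m\eta=e^{2m\eta}$ with this normalization (from \cite{mar1} and the references therein), $\eta_k$ converges to the standard bubble $\eta$ with $\int_{\R{2m}}e^{2m\eta}=\Lambda_1$ and $\eta(y)=-2\log|y|+O(1)$ as $|y|\to\infty$, so letting $R\to\infty$ yields $\beta_i:=\mu(\{x^{(i)}\})\ge\Lambda_1$. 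It remains to identify $\mu$. This comes from a neck analysis on the annuli $\{r_k\lesssim|x-x^{(i)}|\lesssim1\}$, where $u_k$ is controlled by the Green potential of its right-hand side (which converges to $\beta_i\delta_{x^{(i)}}$), giving $u_k(x)\approx a_k+c_{2m}\beta_i\log\frac1{|x-x^{(i)}|}$ with $c_{2m}\Lambda_1=2$; matching the inner expansion $M_k^{(i)}-2\log|y|$ with the outer values $u_0+o(1)$ at scale $1$ forces $c_{2m}\beta_i=2$, i.e. $\beta_i=\Lambda_1$, and $r_k\asymp e^{-M_k^{(i)}/2}$, whence $\int_\Omega e^{2mu_k}=\rho_k e^{2mM_k^{(i)}}r_k^{2m}\asymp\rho_k e^{mM_k^{(i)}}\to\infty$; consequently $f_k\to0$ in $L^1_{\loc}(\overline\Omega\bs S)$, so $\mu$ has no absolutely continuous part. (The relation $\beta_i=\Lambda_1$ can alternatively be read from the Pohozaev identity for \eqref{eq2} on $B_\delta(x^{(i)})$ in the limit $k\to\infty$, $\delta\to0$, where the boundary terms give $\beta_i^2=\Lambda_1\beta_i$.) Hence $\mu=\sum_{i=1}^N\Lambda_1\delta_{x^{(i)}}$, so $\rho_\infty=N\Lambda_1$ as in \eqref{quant}, and $u_0=\int_\Omega G_y\,d\mu=\Lambda_1\sum_{i=1}^N G_{x^{(i)}}$ with $C^{2m-1,\alpha}_{\loc}(\overline\Omega\bs S)$-convergence, which is \eqref{green}.

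The concentration--compactness part is routine once the higher-order Brezis--Merle estimate, the classification of entire solutions, and the boundary non-concentration are granted. The main obstacle is the quantization: making the neck analysis rigorous---precise pointwise control of $u_k$ through its Green potential in the regions between the concentration scale $r_k$ and scale $1$, together with the bookkeeping of the polyharmonic Pohozaev boundary terms---which is what simultaneously pins each mass to $\Lambda_1$ (simple blow-up, using crucially that the coefficient $\rho_k/\int_\Omega e^{2mu_k}$ is \emph{constant}, so no lower-order terms survive in the limit) and rules out any residual bulk. Controlling $u_k$ from below in these regions, where the polyharmonic operator provides no maximum principle, is the technical heart of the matter.
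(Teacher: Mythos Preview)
Your outline follows the paper's architecture closely (concentration set, blow-up, boundary non-concentration via Pohozaev, quantization via Pohozaev), but there is a genuine gap at the blow-up step that is specific to $m\ge 2$. You assert that ``by the classification of entire solutions of $(-\Delta)^m\eta=e^{2m\eta}$ with this normalization (from \cite{mar1}), $\eta_k$ converges to the standard bubble''. This is not what the classification in \cite{mar1} gives. For $m\ge 2$ there exist \emph{non-spherical} finite-volume solutions of \eqref{liou}--\eqref{area} (see \cite{CC}); the result in \cite{mar1} only says that either the solution is the standard bubble \eqref{eta0} (up to translation/dilation), or some $\Delta^j u_0\to a<0$ at infinity for $1\le j\le m-1$. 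Even one step earlier, the mere convergence of your $\eta_k$ to \emph{some} limit is not automatic: the higher-order concentration--compactness theorem (Theorem~\ref{main} here, from \cite{mar2}) has, besides the ``compactness'' and ``$\to-\infty$'' alternatives, a third alternative in which $\eta_k/\beta_k\to\varphi$ with $\beta_k\to\infty$ and $\varphi$ a nontrivial nonpositive polyharmonic function vanishing on a set of Hausdorff dimension up to $2m-1$. Neither the bound $\eta_k\le 0$ nor $\int e^{2m\eta_k}\le C$ excludes these possibilities.

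The paper closes this gap with an ingredient you do not mention: scale-invariant integral gradient bounds coming from the Green representation (Lemma~\ref{claim2} here), namely $\int_{B_R}|\nabla^\ell \hat u_k|^p\le C R^{2m-\ell p}$ for $1\le\ell\le 2m-1$, $1\le p<2m/\ell$. Under the blow-up rescaling these become $\int_{B_R}|\Delta^j\tilde u_k|\le C R^{2m-2j}$, which (i) contradicts $\int_{B_R}|\Delta\tilde u_k|\to\infty$ and thus rules out the polyharmonic alternative (ii-b), and (ii) contradicts $\Delta^j\tilde u_0\to a\ne 0$ (which would force $\int_{B_R}|\Delta^j\tilde u_0|\gtrsim R^{2m}$) and thus rules out the non-spherical solutions, leaving only \eqref{eta0}. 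This is exactly the ``technique of \cite{RS}'' the introduction alludes to. Without it, your lower bound $\beta_i\ge\Lambda_1$ and the subsequent quantization argument are not justified. The rest of your sketch (Pohozaev for boundary non-concentration and for $\beta_i=\Lambda_1$) matches the paper; your heuristic neck matching is replaced in the paper by the direct Pohozaev computation on $B_\delta(x^{(i)})$ that you mention as an alternative.
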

The mean field equation in dimensions $2$ and $4$ has been object of intensive study in the recent years. We refer e.g. to \cite{NS}, \cite{wei}, \cite{RW} and the references therein. In particular in \cite{RW} the $4$-dimensional analogous of our Theorem \ref{trm2} was proved, and many of the ideas developed there are used in our treatment. 

\medskip

The geometric constant $\Lambda_1$ showing up in \eqref{quant} and \eqref{green} is the total $Q$-curvature\footnote{For the definition of $Q$-curvature we refer to \cite{cha}, or to the introduction of \cite{mar1} and the references therein.} of the round $2m$-dimensional sphere. It is worth explaining how this relation with Riemannian geometry arises. It will be shown in Lemma \ref{claim4} below that one can blow up the $u_k$'s at suitably chosen \emph{concentration points}, and get in the limit a solution $u_0$ to the Liouville equation
\begin{equation}\label{liou}
(-\Delta)^m u_0=(2m-1)!e^{2m u_0}\quad\text{in }\R{2m}
\end{equation}
with the bound
\begin{equation}\label{area}
\int_{\R{2m}}e^{2mu_0}dx<\infty.
\end{equation}
Geometrically, if $u_0$ solves \eqref{liou}-\eqref{area}, then the conformal metric $e^{2u_0}g_{\R{2m}}$ on $\R{2m}$ (where $g_{\R{2m}}$ is the Euclidean metric) has constant $Q$-curvature equal to $(2m-1)!$ and finite volume. As shown in \cite{CC}, there are many such conformal metrics on $\R{2m}$, and the crucial step in Lemma \ref{claim4} below is to show that
\begin{equation}\label{eta0}
u_0(x)=:\log\bigg(\frac{2}{1+|x|^2}\bigg).
\end{equation}
The above function has the property that $e^{u_0}g_{\R{2m}}=(\pi^{-1})^*g_{S^{2m}}$, where $g_{S^{2m}}$ is the round metric on $S^{2m}$, and $\pi:S^{2m}\to\R{2m}$ is the stereographic projection. In particular
\begin{equation}\label{vol}
\int_{\R{2m}}e^{2mu_0}dx=|S^{2m}|.
\end{equation}
This is the basic reason why the constant $\Lambda_1$ appears in Theorem \ref{trm2}. 
In order to show that \eqref{eta0} holds, we use the classification result of \cite{mar1} and a technique of \cite{RS}, which allows us to rule out all the solutions of \eqref{liou} which are ``non-spherical'', hence whose total $Q$-curvature might be different from $\Lambda_1$.

\medskip

We will further exploit such connections with conformal geometry mainly by referring to Theorem 1 in \cite{mar2}, about the concentration-compactness phenomena for sequences of conformal metrics on $\R{2m}$ with prescribed $Q$-curvature (compare \cite{BM}, \cite{ARS} and \cite{Rob} for $2$ and $4$-dimensional analogous results). We state a simplified version of this theorem in the appendix, since we shall use it several times.

\medskip

The last crucial step in the proof of Theorem \ref{trm2} is the generalization to arbitrary dimension of a clever argument of Robert-Wei \cite{RW} based on a Pohozaev-type identity, which rules out blow-up points at the boundary (see Lemma \ref{claim11}) and allows to sharply estimate the energy concentrating at each blow-up point (see Lemma \ref{claim13})

\medskip

One can also state Theorem \ref{trm2} as an eigenvalue problem, as in \cite{wei}. In this case one replaces the term $\frac{\rho_k}{\int_\Omega e^{2mu_k}}$ by the constant $\lambda_k>0$ in \eqref{eq2}, so we consider the equation
\begin{equation}\label{eq1}
(-\Delta)^m u_k=\lambda_k e^{2mu_k}.
\end{equation}
The assumption $0<\rho_k\leq C$ gets replaced by
\begin{equation}\label{sigmak}
\Sigma_k:=\int_{\Omega}\lambda_k e^{2mu_k}dx\leq C,
\end{equation}
and we keep the boundary condition \eqref{dir}. Then Theorem \ref{trm2} implies that either
\begin{itemize}
\item[(i)] up to a subsequence $u_k\to u_0$ in $C^{2m-1,\alpha}_{\loc}(\overline \Omega)$, or
\item[(ii)] up to a subsequence $\Sigma_k\to N\Lambda_1$ and $(u_k)$ satisfies \eqref{green}, with the same notation of Theorem \ref{trm2}.
\end{itemize}

\medskip

Several times we use standard elliptic estimates. For the interior estimates one can safely rely on \cite{GT} or \cite{GM}. For the estimates up to the boundary, one can refer to \cite{ADN}. Throughout the paper the letter $C$ denotes a large universal constant which does not depend on $k$ and can change from line to line, or even within the same line.

\section{Proof of Theorem \ref{trm2}}

The proof will be organized as follows. We shall see in Corollary \ref{coru}, that if $\sup_{\Omega} u_k\leq C$, then $u_k$ is bounded in $C^{2m-1,\alpha}(\overline\Omega)$ and case (i) of Theorem \ref{trm2} occurs. Therefore, after Corollary \ref{coru} we shall assume that
\begin{equation}\label{ukinfty}
\lim_{k\to\infty}\sup_{\Omega} u_k=\infty,
\end{equation}
and prove that case (ii) of Theorem \ref{trm2} occurs. Let
\begin{equation}\label{alphak}
\alpha_k:=\frac{1}{2m}\log\bigg(\frac{(2m-1)!\int_\Omega e^{2mu_k}dx}{\rho_k}\bigg),\quad \hat u_k:=u_k-\alpha_k.
\end{equation}

\begin{lemma}\label{claim1} 
Up to selecting a subsequence, we have $\alpha_k\geq -C$.
\end{lemma}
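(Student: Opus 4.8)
The plan is to show that $\alpha_k$ cannot tend to $-\infty$ along any subsequence by deriving a contradiction with the hypothesis $\lim_k \sup_\Omega u_k = \infty$. Observe first that the scaling in \eqref{alphak} is chosen precisely so that $\hat u_k = u_k - \alpha_k$ solves the Liouville-type equation $(-\Delta)^m \hat u_k = (2m-1)! e^{2m\hat u_k}$ in $\Omega$, with $\int_\Omega e^{2m\hat u_k}\,dx = \rho_k/(2m-1)!  \leq C$ by the bound $0<\rho_k\le C$. Thus $\hat u_k$ has uniformly bounded "volume". The boundary condition \eqref{dir} becomes $\hat u_k = \partial_\nu \hat u_k = \cdots = \partial_\nu^{m-1}\hat u_k = -\alpha_k$ (more precisely only the zeroth order trace is shifted, the normal derivatives stay zero), so if $\alpha_k\to-\infty$ then $\hat u_k\to+\infty$ on $\partial\Omega$ in the sense that its boundary trace blows up.

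The key step is a Green's representation argument. Write $\hat u_k(x) = \int_\Omega G_y(x)\,(2m-1)! e^{2m\hat u_k(y)}\,dy + h_k(x)$, where $h_k$ is the $m$-polyharmonic function on $\Omega$ carrying the boundary data; since the only nonzero trace of $\hat u_k$ on $\partial\Omega$ is $\hat u_k|_{\partial\Omega} = -\alpha_k$ while all normal derivatives vanish, and the Dirichlet problem for $(-\Delta)^m$ is solvable, $h_k$ is determined by this data. If $\alpha_k\to-\infty$, then $h_k\to+\infty$ locally uniformly in $\Omega$ (by the maximum principle considerations appropriate to the polyharmonic Dirichlet problem, or simply by linearity: $h_k = -\alpha_k \cdot \phi$ where $\phi$ is the fixed polyharmonic function with $\phi|_{\partial\Omega}=1$, vanishing normal derivatives, and $\phi>0$ in $\Omega$ by e.g. the results in \cite{ACL} on positivity of the relevant Dirichlet problem — alternatively one avoids the sign issue by arguing directly on a ball). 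Meanwhile the potential term $\int_\Omega G_y(x)(2m-1)! e^{2m\hat u_k(y)}\,dy$ is controlled: its total mass is bounded, so it can be bounded below on compact subsets of $\Omega$ by a fixed constant (the Green function $G_y(x)$ for the polyharmonic Dirichlet problem is not everywhere positive in general, but its possible negative part is integrable and bounded, which suffices for a lower bound). Consequently $\hat u_k\to+\infty$ locally uniformly on compact subsets of $\Omega$, hence $\int_\Omega e^{2m\hat u_k}\,dx\to\infty$, contradicting the uniform bound $\int_\Omega e^{2m\hat u_k}\,dx\le C$.

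The main obstacle is the lack of a clean maximum principle for $(-\Delta)^m$ with $m\ge 2$: the Green function $G_\xi$ need not be positive on a general domain, and the polyharmonic function with constant boundary trace need not be of one sign. I expect the cleanest fix is to localize: pick a ball $B\subset\subset\Omega$ and compare $\hat u_k$ on $B$ with the solution of the same equation on $B$ with its actual boundary data on $\partial B$, or to use that the singular part of $G_\xi$ near the diagonal is the (positive) fundamental solution of $(-\Delta)^m$ plus a smooth correction, so that on a fixed compact set the obstruction to positivity is a uniformly bounded error. Either way the structure is: boundary trace $\to+\infty$ $\Rightarrow$ interior values $\to+\infty$ $\Rightarrow$ volume $\to\infty$, contradicting the hypothesis $\rho_k\le C$. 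Passing to a subsequence is needed only to make "$\alpha_k\to-\infty$" a legitimate assumption to contradict.
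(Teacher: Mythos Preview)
Your argument is essentially correct and in fact more elementary than the paper's, but a few points need cleaning up.

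First, two cosmetic issues. You open by saying you will contradict the hypothesis $\lim_k\sup_\Omega u_k=\infty$, but that assumption has \emph{not} been made at this stage (it is introduced only after Corollary~\ref{coru}); fortunately your actual argument never uses it, and the contradiction you reach is with the volume bound $\int_\Omega e^{2m\hat u_k}\,dx\le C$. Second, your discussion of the polyharmonic extension $h_k$ and the auxiliary function $\phi$ is unnecessary: the constant $-\alpha_k$ is itself $m$-polyharmonic with the correct Dirichlet data, so $h_k\equiv -\alpha_k$ and $\phi\equiv 1$. The Green representation is then exactly
\[
\hat u_k(x)=(2m-1)!\int_\Omega G_x(y)\,e^{2m\hat u_k(y)}\,dy-\alpha_k,
\]
which is the paper's \eqref{eq3.3}. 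No positivity of any polyharmonic Dirichlet problem is needed.

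The only genuine technical point is the lower bound on the potential term. A pointwise bound $G_x(y)\ge -C$ does hold (in the critical dimension $n=2m$ the singular part $\gamma_{2m}^{-1}\log\tfrac{1}{|x-y|}$ is bounded below on a bounded domain, and for $x$ in a fixed compact $K\subset\subset\Omega$ the regular part is uniformly bounded), but you can bypass it entirely: integrating \eqref{eq3.3} and using $\|G_y\|_{L^1(\Omega)}\le C$ together with $\int_\Omega e^{2m\hat u_k}\le C$ gives $\int_\Omega|\hat u_k+\alpha_k|\,dx\le C$, hence $\tfrac{1}{|\Omega|}\int_\Omega\hat u_k\ge -C/|\Omega|-\alpha_k$. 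If $\alpha_k\to-\infty$, Jensen's inequality then forces $\int_\Omega e^{2m\hat u_k}\to\infty$, a contradiction.

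For comparison, the paper also starts from \eqref{eq3.3} and derives the same $L^1$ bound $\int_\Omega|\hat u_k+\alpha_k|\,dx\le C$, but then invokes the concentration--compactness Theorem~\ref{main} to obtain the dichotomy: either $\hat u_k$ converges in $C^{2m-1,\alpha}_{\loc}$, or $\hat u_k\to-\infty$ off a null set; in either case the $L^1$ bound forces $\alpha_k\ge -C$. Your route avoids Theorem~\ref{main} altogether and is strictly more elementary; the paper's detour is presumably taken because Theorem~\ref{main} is needed elsewhere anyway.
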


\begin{proof} Indeed
\begin{equation}\label{eqhat}
(-\Delta)^m\hat u_k= (2m-1)!e^{2m \hat u_k}\quad \textrm{in }\Omega
\end{equation}
and
$$\hat u_k=-\alpha_k,\quad \partial_\nu \hat u_k=\ldots=\partial_\nu^{m-1}\hat u_k=0\quad \textrm{on }\partial\Omega.$$
Moreover
\begin{equation}\label{eq3.2}
\int_{\Omega}e^{2m\hat u_k}dx=\frac{\rho_k}{(2m-1)!}\leq C.
\end{equation}
Using Green's representation formula, we infer
\begin{equation}\label{eq3.3}
\hat u_k(x)=(2m-1)!\int_\Omega G_x(y)e^{2m \hat u_k(y)}dy-\alpha_k.
\end{equation}
Then, integrating \eqref{eq3.3} and using \eqref{eq3.2}, the fact that $\|G_y\|_{L^1(\Omega)}\leq C$, with $C$ independent of $y$, and the symmetry of $G$, i.e. $G_x(y)=G_y(x)$, we get
%\marginpar{mi sembra che questa stima non serva per davvero}
%\begin{eqnarray}
%\int_\Omega |\Delta \hat u_k|dx&\leq&\int_\Omega\int_\Omega |\Delta_x G(x,y)|e^{2m\hat u_k(y)}dydx\nonumber\\
%&\leq& C\int_\Omega\int_\Omega \frac{e^{2m\hat u_k(y)}}{|x-y|^2}dydx\leq C,\label{eq3.3b}
%\end{eqnarray}
%and
\begin{equation}\label{eq3.4}
\int_\Omega |\hat u_k+\alpha_k|dx\leq C.
\end{equation}
Now, according to Theorem \ref{main} in the Appendix, we have that one of the following is true:
\begin{itemize}
\item[(i)] $\hat u_k\to \hat u_0$ in $C^{2m-1,\alpha}_{\loc}(\Omega)$ for some function $\hat u_0$.
\item[(ii)] $\hat u_k\to-\infty$ locally uniformly in $\Omega\backslash \Omega_0$, for some closed nowhere dense (possibly empty) set $\Omega_0$ of Hausdorff dimension at most $2m-1$.
\end{itemize}
In both cases the claim of the lemma easily follows from \eqref{eq3.4}.
\end{proof}

\begin{cor}\label{coru} The following facts are equivalent:
\begin{itemize}
\item[(i)] Up to selecting subsequences, $u_k\leq C$.
\item[(ii)] Up to selecting subsequences, $\hat u_k\leq C$.
\item[(iii)] Up to selecting subsequences, $u_k\to u_0$ in $C^{2m-1,\alpha}(\overline\Omega)$ for some smooth function $u_0$.
\end{itemize}
\end{cor}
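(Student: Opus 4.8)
The plan is to prove the chain of implications $(iii)\Rightarrow(i)\Rightarrow(ii)\Rightarrow(iii)$. The implication $(iii)\Rightarrow(i)$ is immediate, since $C^{2m-1,\alpha}(\overline\Omega)$-convergence forces $\sup_\Omega u_k\to\sup_{\overline\Omega}u_0<\infty$. For $(i)\Rightarrow(ii)$, one passes to a common subsequence on which both $u_k\leq C$ and (by Lemma \ref{claim1}) $\alpha_k\geq-C$ hold; since $\hat u_k=u_k-\alpha_k$ by \eqref{alphak}, this gives $\hat u_k\leq C$.

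The substance is the implication $(ii)\Rightarrow(iii)$. Assume $\hat u_k\leq C$. As $\alpha_k$ is constant, $(-\Delta)^m u_k=(-\Delta)^m\hat u_k$, so by \eqref{eqhat}, \eqref{eq3.2} and \eqref{eq2},
$$(-\Delta)^m u_k=f_k\quad\text{in }\Omega,\qquad f_k:=(2m-1)!\,e^{2m\hat u_k}=\rho_k\,\frac{e^{2mu_k}}{\int_\Omega e^{2mu_k}dx}\geq0.$$
The assumption $\hat u_k\leq C$ immediately yields $\|f_k\|_{L^\infty(\Omega)}\leq(2m-1)!\,e^{2mC}$, hence $(f_k)$ is bounded in $L^p(\Omega)$ for every $p\in[1,\infty]$, while $\int_\Omega f_k\,dx=\rho_k\leq C$. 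Next I would upgrade this to a uniform bound on $u_k$: combining \eqref{eq3.3} with $u_k=\hat u_k+\alpha_k$ gives the Green representation $u_k(x)=\int_\Omega G_x(y)f_k(y)\,dy$, so by the uniform bound $\|G_x\|_{L^1(\Omega)}\leq C$ used in Lemma \ref{claim1},
$$\|u_k\|_{L^\infty(\Omega)}\leq\|f_k\|_{L^\infty(\Omega)}\;\sup_{x\in\Omega}\|G_x\|_{L^1(\Omega)}\leq C.$$

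At this point $u_k$ solves the Dirichlet problem $(-\Delta)^m u_k=f_k$ in $\Omega$ with the boundary condition \eqref{dir}, the right-hand side being bounded in $L^p(\Omega)$ and $\|u_k\|_{L^p(\Omega)}\leq C$ for all $p<\infty$. The $L^p$ elliptic estimates up to the boundary of \cite{ADN} then give $\|u_k\|_{W^{2m,p}(\Omega)}\leq C$. Choosing $p$ so large that $2m/p<1-\alpha$ and invoking the compact Sobolev embedding $W^{2m,p}(\Omega)\hookrightarrow C^{2m-1,\alpha}(\overline\Omega)$, we extract a subsequence with $u_k\to u_0$ in $C^{2m-1,\alpha}(\overline\Omega)$. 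Finally, uniform convergence gives $\int_\Omega e^{2mu_k}dx\to\int_\Omega e^{2mu_0}dx>0$ and, along a further subsequence, $\rho_k\to\rho_0$; thus $f_k\to\rho_0\,e^{2mu_0}/\int_\Omega e^{2mu_0}dx$ uniformly and $u_0$ solves $(-\Delta)^m u_0=\rho_0\,e^{2mu_0}/\int_\Omega e^{2mu_0}dx$ with the boundary condition \eqref{dir}. Since this right-hand side lies in $C^{2m-1,\alpha}(\overline\Omega)$, a standard Schauder bootstrap shows $u_0\in C^\infty(\overline\Omega)$, and the cycle is closed.

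There is essentially no genuine obstacle left: the hard analysis has been absorbed into Lemma \ref{claim1} (and hence into Theorem \ref{main} of the Appendix). The only point needing a little care is the passage from the one-sided bound $\hat u_k\leq C$ to a two-sided $C^{2m-1,\alpha}$ bound; the mechanism is that, because $f_k=(-\Delta)^m u_k\geq0$, a one-sided bound on $\hat u_k$ already pins down the full $L^\infty$ norm of $f_k$, after which the uniform $L^1$ bound on the Green functions and standard elliptic regularity finish the job.
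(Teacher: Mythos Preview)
Your proof is correct and follows essentially the same route as the paper's: the cycle $(iii)\Rightarrow(i)\Rightarrow(ii)\Rightarrow(iii)$, with $(i)\Rightarrow(ii)$ resting on Lemma \ref{claim1} and $(ii)\Rightarrow(iii)$ obtained from the pointwise bound $|(-\Delta)^m u_k|=(2m-1)!e^{2m\hat u_k}\leq C$ together with \eqref{dir} and elliptic estimates up to the boundary. The paper simply compresses your $(ii)\Rightarrow(iii)$ step into a single line, whereas you spell out the Green-function bound on $\|u_k\|_{L^\infty}$, the $W^{2m,p}$ estimate, the compact embedding, and the smoothness bootstrap for $u_0$; these elaborations are welcome but do not change the underlying mechanism.
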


\begin{proof} (i) $\Rightarrow $ (ii) follows at once from Lemma \ref{claim1}.

\medskip

\noindent (ii) $\Rightarrow$ (iii) follows by elliptic estimates, observing that
$$|(-\Delta)^m u_k|=|(-\Delta)^m \hat u_k|=\big|(2m-1)!e^{2m \hat u_k}\big|\leq C$$
and using \eqref{dir}.

\medskip

\noindent (iii)$\Rightarrow$ (i) is obvious.
\end{proof}

\begin{lemma}\label{claim2} For all $\ell\in\{1,\ldots,2m-1\}$ and for $p\in[1,\tfrac{2m}{\ell})$, there exists $C=C(\ell,p)$ such that
\begin{equation}\label{nablai}
\int_{B_R(x_0)}|\nabla^\ell \hat u_k|^p dx\leq C R^{2m-ip},
\end{equation}
for any $B_R(x_0)\subset\Omega$.
\end{lemma}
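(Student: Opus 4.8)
The plan is to obtain the estimate \eqref{nablai} from Green's representation formula \eqref{eq3.3}. Differentiating under the integral sign $\ell$ times gives, for $x\in\Omega$,
$$\nabla^\ell\hat u_k(x)=(2m-1)!\int_\Omega \nabla_x^\ell G_x(y)\,e^{2m\hat u_k(y)}\,dy,$$
so the whole problem reduces to controlling a convolution-type operator against the measure $d\mu_k:=(2m-1)!e^{2m\hat u_k}dx$, which by \eqref{eq3.2} has uniformly bounded total mass $\mu_k(\Omega)=\rho_k\le C$. The key analytic input is the standard estimate on the derivatives of the polyharmonic Green function: $|\nabla_x^\ell G_x(y)|\le C|x-y|^{\ell-2m}$ (with a logarithmic correction when $\ell=0$, irrelevant here since $\ell\ge1$), which follows from the fact that $G_x(y)$ differs from the fundamental solution of $(-\Delta)^m$ by a smooth function and from interior elliptic estimates near $\partial\Omega$.

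Next I would estimate the $L^p$ norm of $\nabla^\ell\hat u_k$ on a ball $B_R(x_0)\Subset\Omega$ directly. Writing
$$|\nabla^\ell\hat u_k(x)|\le C\int_\Omega |x-y|^{\ell-2m}\,d\mu_k(y),$$
the cleanest route is Minkowski's integral inequality:
$$\big\||\nabla^\ell\hat u_k|\big\|_{L^p(B_R(x_0))}\le C\int_\Omega\Big(\int_{B_R(x_0)}|x-y|^{(\ell-2m)p}\,dx\Big)^{1/p}d\mu_k(y).$$
For the inner integral, since $(2m-\ell)p<2m$ by the hypothesis $p<\tfrac{2m}{\ell}$ — wait, more precisely we need $(2m-\ell)p<2m$, i.e. $p<\tfrac{2m}{2m-\ell}$; for $\ell\ge m$ this is a genuine restriction, but in any case the paper's hypothesis $p<2m/\ell$ together with $\ell\le 2m-1$ suffices to make the singularity $|x-y|^{(\ell-2m)p}$ locally integrable in $2m$ dimensions — so the inner integral over $B_R(x_0)$ is dominated by $C(\int_0^{CR} r^{(\ell-2m)p}r^{2m-1}dr)=CR^{2m-(2m-\ell)p}$, uniformly in $y$. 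Plugging this back and using $\mu_k(\Omega)\le C$ yields
$$\int_{B_R(x_0)}|\nabla^\ell\hat u_k|^p\,dx\le C R^{2m-(2m-\ell)p}\,\mu_k(\Omega)\le C R^{2m-(2m-\ell)p},$$
which is exactly \eqref{nablai} — the exponent there reads $2m-\ell p$ in the statement but should be $2m-(2m-\ell)p$; either way the argument is the same, and I would simply reproduce whatever exponent makes the scaling consistent with $[\nabla^\ell\hat u_k]\sim (\text{length})^{\ell-2m}$ against a bounded measure.

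The main obstacle, and the only place requiring care, is the behaviour of $\nabla_x^\ell G_x(y)$ when both $x$ and $y$ are near $\partial\Omega$: the clean bound $|\nabla_x^\ell G_x(y)|\le C|x-y|^{\ell-2m}$ must hold with a constant $C$ uniform over $\Omega\times\Omega$. Since in Lemma~\ref{claim2} we only integrate $x$ over $B_R(x_0)\subset\Omega$ but $y$ ranges over all of $\Omega$, one genuinely needs the two-sided boundary estimate. This is classical for the Dirichlet polyharmonic Green function on a smooth bounded domain and can be quoted from \cite{ACL} or the elliptic references \cite{ADN}, \cite{GM}; the decomposition $G_x(y)=c_m|x-y|^{2m-2n}$-type fundamental solution plus a harmonic-type smooth remainder, combined with \eqref{dir}, gives it. Once that pointwise kernel bound is in hand, the rest is the elementary integration above together with the uniform mass bound \eqref{eq3.2}, and no further input on $\hat u_k$ (such as the dichotomy from Theorem~\ref{main}) is needed.
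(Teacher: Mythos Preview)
Your overall strategy---Green's representation, a pointwise kernel bound, then Minkowski's integral inequality---is exactly the paper's approach (the paper phrases it via duality against a test function $\varphi\in C_c^\infty$, which is equivalent). The substantive error is your kernel estimate. We are in the critical dimension $n=2m$ for the operator $(-\Delta)^m$, so the fundamental solution is \emph{logarithmic}, $c_m\log\frac{1}{|x-y|}$, not a power $|x-y|^{2m-n}$. Hence the correct derivative bound, as quoted in the paper from \cite{DAS}, is
\[
|\nabla^\ell G_x(y)|\le \frac{C}{|x-y|^{\ell}},
\]
not $C|x-y|^{\ell-2m}$. With this in hand your Minkowski argument reads
\[
\big\|\nabla^\ell\hat u_k\big\|_{L^p(B_R(x_0))}\le C\int_\Omega\Big(\int_{B_R(x_0)}\frac{dx}{|x-y|^{\ell p}}\Big)^{1/p}d\mu_k(y)\le C R^{\frac{2m}{p}-\ell}\,\mu_k(\Omega),
\]
valid precisely when $\ell p<2m$, i.e.\ $p<2m/\ell$---which is the hypothesis in the statement---and yields the exponent $2m-\ell p$, matching the paper (the ``$i$'' in \eqref{nablai} is a typo for $\ell$). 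Your alternative exponent $2m-(2m-\ell)p$ and the integrability range $p<2m/(2m-\ell)$ both stem from the wrong kernel bound; in particular for small $\ell$ (say $\ell=1$, $m\ge 2$) your range is strictly smaller than the stated one, so the argument as written does not even cover the lemma. Once the correct logarithmic behavior is inserted, the proof is complete and coincides with the paper's.
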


\begin{proof}
We prove the claim by duality. Let $\varphi\in C^{\infty}_c(\Omega)$ and $q=\tfrac{p}{p-1}$. Differentiating \eqref{eq3.3}, using Fubini's theorem, the relation $G_x(y)=G_y(x)$ and the estimate (see \cite{DAS})
\begin{equation}\label{DAS}
|\nabla^\ell G_y(x)|\leq \frac{C}{|x-y|^\ell},
\end{equation}
we get
\begin{eqnarray*}
 \int_{B_R(x_0)}|\nabla^\ell\hat u_k|\varphi dx&\leq&C\int_{B_R(x_0)}\left(\int_\Omega\abs{\nabla^\ell G_y(x)}e^{2m \hat u_k
(y)}dy\right)\abs{\varphi(x)}dx\\
&\leq& C\int_\Omega e^{2m \hat u_k(y)}\bigg(\int_{B_R(x_0)}\abs{x-y}^{-\ell}\abs{\varphi(x)}dx\bigg)dy\\
&\leq& C\norm{\varphi}_{L^q(\Omega)}\int_\Omega e^{2m \hat u_k(y)}\bigg(\int_{B_R(x_0)}\frac{dx}{|x-y|^{\ell p}}\bigg)^\frac{1}{p} dy\\
&\leq& C\norm{\varphi}_{L^q(\Omega)} R^{\frac{2m}{p}-\ell},
\end{eqnarray*}
where in the last inequality we used $p<\tfrac{2m}{\ell}$, \eqref{eq3.2}, and the simple estimate
$$\int_{B_R(x_0)}\frac{dx}{|x-y|^{\ell p}}\leq \int_{B_R(y)}\frac{dx}{|x-y|^{\ell p}}\leq CR^{2m-\ell p}.$$
The lemma follows at once.
\end{proof}

\begin{lemma}\label{claim3}
Let $x_k\in\Omega$ be such that
\begin{equation}\label{ukin}
u_k(x_k)=\max_\Omega u_k\to\infty.
\end{equation}
Let $\mu_k:=2e^{-\hat u_k(x_k)}$. Then $\frac{\dist(x_k,\de\Omega)}{\mu_k}\rightarrow+\infty$.
\end{lemma}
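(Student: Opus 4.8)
The plan is to argue by contradiction: suppose that along a subsequence $\frac{\dist(x_k,\partial\Omega)}{\mu_k}$ stays bounded, say $\frac{\dist(x_k,\partial\Omega)}{\mu_k}\leq C_0$. The idea is to perform the standard blow-up around the maximum point $x_k$ at scale $\mu_k$, observe that the rescaled sequence lives (after translation) on a half-space-like domain, derive a limiting Liouville-type equation on a half-space with Dirichlet boundary conditions, and then reach a contradiction with the positivity/size of the concentrating mass, since such a problem on a half-space cannot carry the energy that the rescaling forces onto it.

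Concretely, first I would set $\eta_k(x):=\hat u_k(x_k+\mu_k x)-\hat u_k(x_k)+\log 2$ (the shift by $\log 2$ chosen so that $\mu_k=2e^{-\hat u_k(x_k)}$ makes the coefficient come out to $(2m-1)!$), defined on the rescaled domain $\Omega_k:=\mu_k^{-1}(\Omega-x_k)$. By the scaling of $(-\Delta)^m$ and \eqref{eqhat}, $\eta_k$ solves $(-\Delta)^m\eta_k=(2m-1)!e^{2m\eta_k}$ on $\Omega_k$, with $\eta_k\leq\eta_k(0)=\log 2$ (since $x_k$ is the max of $u_k$, equivalently of $\hat u_k$), and by the change of variables $\int_{\Omega_k}e^{2m\eta_k}dx=\int_\Omega e^{2m\hat u_k}dx\leq C$ from \eqref{eq3.2}. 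Under the contradiction hypothesis, the rescaled boundary $\partial\Omega_k$ stays within bounded distance of the origin; up to a rotation the domains $\Omega_k$ converge (locally, in the Hausdorff sense) to a half-space $H=\{x_{2m}>-d\}$ for some $d\geq 0$, and the Dirichlet data \eqref{dir} rescale to $\eta_k=-\alpha_k+\log 2$ (hence bounded below by Lemma \ref{claim1}), $\partial_\nu\eta_k=\dots=\partial_\nu^{m-1}\eta_k=0$ on $\partial\Omega_k$. Using the uniform upper bound $\eta_k\leq\log 2$, the bound \eqref{eq3.2}, and the interior/boundary elliptic estimates together with Theorem \ref{main} of the Appendix (or a direct argument through Green's representation as in Lemma \ref{claim1}), I would extract a subsequential limit $\eta_k\to\eta_0$ in $C^{2m-1,\alpha}_{\loc}(\overline H)$, where $\eta_0$ solves $(-\Delta)^m\eta_0=(2m-1)!e^{2m\eta_0}$ in $H$ with $\eta_0(0)=\log 2$, $\eta_0\leq\log 2$, the homogeneous Dirichlet conditions on $\partial H$, and $\int_H e^{2m\eta_0}dx\leq C$. (The case of genuine $C^{2m-1,\alpha}_{\loc}$ convergence is the relevant one: the alternative ``$\eta_k\to-\infty$'' is excluded because $\eta_k(0)=\log 2$.)

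The contradiction then comes from a Pohozaev-type identity on the half-space, exactly the kind of argument announced in the introduction and carried out later in Lemma \ref{claim11}. Multiplying the equation by $x\cdot\nabla\eta_0$ and integrating over large balls $B_r\cap H$, the interior terms reproduce the Pohozaev balance for the Liouville equation, while the boundary terms along $\partial H$ — which, thanks to the homogeneous Dirichlet conditions \eqref{dir}, reduce to an explicit expression in the normal derivative $\partial_\nu^m\eta_0$ and the position vector — turn out to have a sign forcing $\int_H e^{2m\eta_0}dx=0$, i.e. a contradiction with $\eta_0(0)=\log 2$. (Equivalently, one can invoke the classification on the half-space: there is no solution of the Liouville equation on a half-space with homogeneous Dirichlet data and finite volume, the volume being forced to vanish.) Hence the contradiction hypothesis is untenable and $\frac{\dist(x_k,\partial\Omega)}{\mu_k}\to\infty$.

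The main obstacle is the rigorous treatment of the blow-up near the boundary: one must verify that the rescaled domains $\Omega_k$ really do converge to a half-space in a strong enough sense for the elliptic estimates up to the boundary (via \cite{ADN}) to pass to the limit with the full Dirichlet system \eqref{dir}, and then establish the boundary Pohozaev identity on the limiting half-space with enough care that the boundary terms can be controlled by the homogeneous Cauchy data. The interior parts are routine given Theorem \ref{main}, but the boundary analysis — and in particular ruling out the degenerate possibility $d>0$ where the limit domain is a translated half-space — is where the real work lies; this is precisely why the paper isolates the boundary blow-up exclusion as a separate result (Lemma \ref{claim11}) and it is natural that Lemma \ref{claim3} be a first, simpler incarnation of the same mechanism.
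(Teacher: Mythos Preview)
Your plan contains a computational slip that leads you far astray. On $\partial\Omega_k$ the point $x_k+\mu_kx$ lies on $\partial\Omega$, so $u_k(x_k+\mu_kx)=0$ and hence $\hat u_k(x_k+\mu_kx)=-\alpha_k$. Therefore
\[
\eta_k(x)=\hat u_k(x_k+\mu_kx)-\hat u_k(x_k)+\log 2=-\alpha_k-(u_k(x_k)-\alpha_k)+\log 2=-u_k(x_k)+\log 2,
\]
not $-\alpha_k+\log 2$. Since $u_k(x_k)\to\infty$ by hypothesis, the boundary values of $\eta_k$ tend to $-\infty$. (Even with your formula, Lemma \ref{claim1} gives $\alpha_k\geq -C$, so $-\alpha_k+\log 2$ would be bounded \emph{above}, not below.) Consequently there is no chance of extracting a limit in $C^{2m-1,\alpha}_{\loc}(\overline H)$ satisfying ``homogeneous Dirichlet conditions'': the boundary data are not converging to anything finite, and your proposed half-space Liouville/Pohozaev step never gets off the ground.

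Once the boundary value is computed correctly, the contradiction is immediate and the paper exploits exactly this. Using the Green representation \eqref{eq3.3} and the pointwise gradient bound \eqref{DAS}, one shows directly that $|\nabla^\ell\tilde u_k|\leq C(R)$ on $B_R(0)\cap\Omega_k$ for $1\leq\ell\leq 2m-1$ (splitting the integral into $B_{2R\mu_k}(x_k)$, where $e^{2m\hat u_k}\leq 2^{2m}\mu_k^{-2m}$, and its complement). Hence $|\tilde u_k(x)-\tilde u_k(0)|\leq C(R)R$ for all $x\in B_R(0)\cap\Omega_k$. Taking $x\in B_R(0)\cap\partial\Omega_k$ (which exists under the contradiction hypothesis) gives $|u_k(x_k)|=|\tilde u_k(0)-\tilde u_k(x)|\leq C(R)$, contradicting \eqref{ukin}. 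No passage to a limiting problem, no Pohozaev identity, and no half-space classification are needed here; those tools are reserved for the genuinely harder Lemma \ref{claim11}.
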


\begin{proof}
Suppose that the conclusion of the lemma is false. Then the rescaled sets
$$\Omega_k:=\tfrac{1}{\mu_k}(\Omega - x_k)$$
converge, up to rotation, to $(-\infty,t_0)\times\R{2m-1}$ for some $t_0\geq 0$. Define
\begin{equation}\label{utilde}
\tilde u_k(x):=\hat u_k(x_k + \mu_k x) +\log(\mu_k),\quad x\in\Omega_k.
\end{equation}
By \eqref{ukin} and Corollary \ref{coru} we have $\mu_k\rightarrow 0$.  Fix $R>0$ such that $B_R(0)\cap\de\Omega_k\neq\emptyset$, and let $x\in B_R(0)\cap\Omega_k$. Then, for $1\leq \ell\leq 2m-1$, using \eqref{eq3.3} and \eqref{DAS}, we get
\begin{eqnarray*}
 \abs{\nabla^\ell\tilde u_k(x)}
&\leq & C   \mu_k^\ell\int_\Omega|\nabla^\ell G_{x_k + \mu_kx}(y)|e^{2m\hat u_k(y)}dy\\
&\leq& C\mu_k^\ell\bigg(\int_{\Omega\setminus B_{2R\mu_k}(x_k)}\frac{1}{\abs{x_k + \mu_k x - y}^\ell}e^{2m\hat u_k(y)}dy\\
&&+\int_{B_{2R\mu_k}(x_k)}\frac{1}{|x_k + \mu_k x - y|^\ell}e^{2m\hat u_k(y)}dy\bigg)\\
&\leq& CR^{-\ell}\int_\Omega e^{2m\hat u_k}dy+ C\mu_k^{\ell-2m}\int_{B_{2R\mu_k}(x_k)}\frac{dy}{\abs{x_k + \mu_k x - y}^\ell}\\
&\leq& C(R),
\end{eqnarray*}
where we used that for $y\in \Omega\setminus B_{2R\mu_k}(x_k)$ and $x\in B_R(0)\cap \Omega_k$ we have $R\mu_k\leq \abs{x_k + \mu_k x - y}$ and, for any $y\in \Omega$ we have $e^{2m\hat u_k(y)}\leq 2^{2m}\mu_k^{-2m}$. This implies
$$|\tilde u_k(x)-\tilde u_k(0)|\leq C(R)|x|\quad \textrm{for }|x|\leq R.$$
Choosing $x\in B_R(0)\cap \partial \Omega_k$ we get $|u_k(x_k)|=\abs{\hat u_k(x_k)+\alpha_k}\leq C(R)$, contradicting \eqref{ukin}.
\end{proof}

\begin{rmk}\label{claim3bis}
In the choice of the scales $\mu_k$ we are free to some extent. Our particular choice is made in order to give a cleaner form to the blow-up limit described in Lemma \ref{claim4} and to make the connection with the problem of prescribing the $Q$-curvature more transparent.
\end{rmk}

From now on we shall assume that \eqref{ukinfty} holds.

\begin{lemma}\label{claim4}
Let $\tilde u_k$ be defined as in \eqref{utilde}. Then, up to selecting a subsequence, we have
\begin{equation}\label{eqclaim4} \lim_{k\rightarrow +\infty}\tilde u_k(x)=\log\bigg(\frac{2}{1+\abs{x}^2}\bigg)\quad \textrm{in }C^{2m-1,\alpha}_{\loc}(\R{2m}).
\end{equation}
\end{lemma}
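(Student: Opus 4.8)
The plan is to show that the rescaled functions $\tilde u_k$ satisfy a Liouville-type equation on larger and larger balls with controlled energy, extract a locally smooth limit $u_0$ solving \eqref{liou}--\eqref{area}, and then identify $u_0$ as the standard spherical solution \eqref{eta0} by ruling out all ``non-spherical'' solutions. First I would record that, by the definition \eqref{utilde} and \eqref{eqhat}, $\tilde u_k$ solves
\[
(-\Delta)^m\tilde u_k=(2m-1)!e^{2m\tilde u_k}\quad\text{in }\Omega_k,\qquad \Omega_k=\tfrac{1}{\mu_k}(\Omega-x_k),
\]
that $\tilde u_k(0)=\log 2$ is the maximum of $\tilde u_k$ (hence $\tilde u_k\leq\log 2$ everywhere), and that the energy is uniformly bounded:
\[
\int_{\Omega_k}e^{2m\tilde u_k}dx=\int_\Omega e^{2m\hat u_k}dy=\frac{\rho_k}{(2m-1)!}\leq C,
\]
using \eqref{eq3.2}. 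By Lemma \ref{claim3}, $\Omega_k$ exhausts $\R{2m}$, so for every fixed $R$ we have $B_R(0)\subset\Omega_k$ for $k$ large.

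Next I would apply the concentration-compactness alternative of Theorem \ref{main} in the Appendix (or reprove the relevant part directly). The crucial point is to exclude the blow-up/vanishing alternatives. Blow-up to $+\infty$ is impossible because $\tilde u_k\leq\log 2$. To exclude $\tilde u_k\to-\infty$ locally uniformly away from a small set, I would use that $0$ is a maximum point with $\tilde u_k(0)=\log 2$ together with a gradient estimate: differentiating Green's representation \eqref{eq3.3} in rescaled form and using \eqref{DAS} exactly as in the proof of Lemma \ref{claim3}, one gets $|\nabla^\ell\tilde u_k|\leq C(R)$ on $B_R(0)$ for $1\leq\ell\leq 2m-1$, so $\tilde u_k$ cannot drop to $-\infty$ near $0$. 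Hence alternative (i) of Theorem \ref{main} holds: $\tilde u_k\to u_0$ in $C^{2m-1,\alpha}_{\loc}(\R{2m})$, and passing to the limit in the equation and using Fatou gives that $u_0$ solves \eqref{liou} with $\int_{\R{2m}}e^{2mu_0}dx\leq C<\infty$, i.e. \eqref{area}. Note also $u_0(0)=\log 2$ and $u_0\leq\log 2$.

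The main obstacle — and the heart of the lemma — is the last step: showing that $u_0$ is exactly $\log\!\big(\tfrac{2}{1+|x|^2}\big)$ rather than one of the many other finite-volume solutions of \eqref{liou} produced in \cite{CC}. Here I would invoke the classification theorem of \cite{mar1}, which describes all solutions of \eqref{liou}--\eqref{area}, and combine it with the technique of \cite{RS} to discard the ``non-spherical'' ones. Concretely, the non-standard solutions have the property that $u_0(x)=-c\log|x|+O(1)$ at infinity with $c$ strictly larger than the value $c=2$ corresponding to the spherical solution; equivalently their total $Q$-curvature $(2m-1)!\int_{\R{2m}}e^{2mu_0}$ exceeds $\Lambda_1$ (in fact it is at least $2\Lambda_1$). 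The argument of \cite{RS} shows that such growth at infinity, combined with the fact that $u_0$ arises as a \emph{global} uniform limit of the maxima-normalized $\tilde u_k$'s (so that no mass is lost at spatial infinity in the blow-up), forces a contradiction — one would typically integrate the equation over large balls, use the divergence structure of $(-\Delta)^m$ together with the decay estimates on $\nabla^\ell u_0$, and compare the flux through $\partial B_R$ with the volume $\int_{B_R}e^{2mu_0}$, concluding that the asymptotic exponent must be the spherical one. Since the spherical solution is the unique solution of \eqref{liou}--\eqref{area} attaining its maximum $\log 2$ at the origin with the correct asymptotics, \eqref{eta0} follows, which gives \eqref{eqclaim4}.
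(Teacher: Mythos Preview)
Your Step 1 is fine and close in spirit to the paper's, though you rule out alternative (ii) of Theorem \ref{main} via the pointwise gradient bound from Lemma \ref{claim3}, whereas the paper uses the scaled $L^1$ estimate coming from Lemma \ref{claim2}; both work here.

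The gap is in Step 2, which as you say is the heart of the lemma. Your description of the non-spherical solutions is incorrect: they do \emph{not} behave like $-c\log|x|$ with $c>2$, and their total $Q$-curvature is strictly \emph{less} than $\Lambda_1$, not at least $2\Lambda_1$. In the classification of \cite{mar1} the non-spherical solutions carry a nontrivial polynomial part, and the precise dichotomy used is: either $\tilde u_0$ is standard, or there exists $j\in\{1,\dots,m-1\}$ and a constant $a<0$ with $\Delta^j\tilde u_0(x)\to a$ as $|x|\to\infty$. The flux/volume comparison you sketch does not exploit that $\tilde u_0$ is a blow-up limit and cannot by itself exclude these solutions; nor is ``no mass lost at infinity'' available at this point of the argument. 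What the paper (and the technique of \cite{RS}) actually uses is the estimate from Lemma \ref{claim2}, which after rescaling gives
\[
\int_{B_R(0)}|\Delta^j\tilde u_k|\,dx\leq C\,R^{2m-2j}\qquad (1\leq j\leq m-1),
\]
uniformly in $k$. If $\Delta^j\tilde u_0\to a<0$, then for $R$ large $\int_{B_R}|\Delta^j\tilde u_0|\geq \tfrac{|a|}{2}|B_R|\sim R^{2m}$, contradicting the displayed bound in the limit. This $L^1$ control is the missing ingredient; note that the pointwise bound you obtained in Step 1 only yields $\int_{B_R}|\Delta^j\tilde u_k|\lesssim R^{4m-2j}$, which is not strong enough, so Lemma \ref{claim2} is genuinely needed here.
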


\begin{proof} We give the proof in two steps.

\medskip

\noindent\emph{Step 1.}
We first claim that up to a subsequence, $\tilde u_k\to \tilde u_0$ in $C^{2m-1,\alpha}_{\loc}(\R{2m})$, for some smooth function $\tilde u_0$ satisfying
\begin{equation}\label{u0}
(-\Delta)^m\tilde u_0=(2m-1)!e^{2m\tilde u_0}.
\end{equation}
Let us first assume $m>1$. We apply Theorem \ref{main} on $\R{2m}$ to the sequence $(\tilde u_k)$, where it is understood that one has to invade $\R{2m}$ with bounded sets and extract a diagonal subsequence in order to get the local convergence on all of $\R{2m}$. Since $\tilde u_k\leq \log 2$, we have $S_1=\emptyset$, in the notation of Theorem \ref{main}. Then one of the following is true:

\begin{itemize}
\item[(i)] $\tilde u_k\to \tilde u_0$ in $C^{2m-1,\alpha}_{\loc}(\R{2m})$ for some function $\tilde u_0\in C^{2m-1,\alpha}_{\loc}(\R{2m})$, or
\item[(ii-a)] $\tilde u_k\to -\infty$ locally uniformly in $\R{2m}$ (case $S_0=\emptyset$), or
\item[(ii-b)] there exists a closed nowhere dense set $S_0\neq \emptyset$ of Hausdorff dimension at most $2m-1$ and numbers $\beta_k\to\infty$ such that
$$\frac{\tilde u_k}{\beta_k}\to\varphi\textrm{ in }C^{2m-1,\alpha}_{\loc}(\R{2m}\backslash S_0),$$
where
\begin{equation}\label{eq71}
\Delta^m\varphi\equiv 0,\quad \varphi\leq 0,\quad \varphi\not\equiv 0 \textrm{ on }\R{2m},\quad \varphi\equiv 0\textrm{ on }S_0.
\end{equation}
\end{itemize}
Since $\tilde u_k(0)=\log 2$, (ii-a) can be ruled out. Assume now that (ii-b) occurs. From Liouville's theorem and \eqref{eq71}, we get $\Delta\varphi\not\equiv 0$, hence for some $R>0$ we have
$\int_{B_R(0)}|\Delta\varphi|dx>0$ and
\begin{equation}\label{72}
\lim_{k\to \infty}\int_{B_R}|\Delta \tilde u_k|dx=\lim_{k\to\infty}\beta_k\int_{B_R(0)}|\Delta\varphi|dx=+\infty.
\end{equation}
By \eqref{nablai}, and using the change of variables $y=x_k+\mu_k x$, we get, for $1\leq j\leq m-1$,  
\begin{eqnarray}
\int_{B_R(0)}|\Delta^j \tilde u_k|dx&=& \mu_k^{-2m+2j}\int_{B_{R\mu_k}(x_k)} |\Delta^j \hat u_k|dy
  \nonumber\\
&\leq&C \mu_k^{-2m+2j}(R\mu_k)^{2m-2j}\leq C R^{2m-2j},\label{deltaj}
\end{eqnarray}
which contradicts \eqref{72} for $j=1$ and any fixed $R>0$. Hence (i) occurs. Clearly $\tilde u_0$ satisfies \eqref{u0} and our claim is proved.

For the case $m=1$, we infer from Theorem 3 in \cite{BM} that either case (i) or (ii-a) above occur, and case (ii-a) is ruled out as above. 

\medskip

\noindent\emph{Step 2.} We now want to prove that $\tilde u_0=\log\frac{2}{1+|x|^2}$. From Fatou's lemma and \eqref{eq3.2} we infer
\begin{eqnarray*}
\int_{\R{2m}}e^{2m\tilde u_0}dx&=&\lim_{R\to\infty}\int_{B_R(0)}e^{2m\tilde u_0}dx \leq \lim_{R\to\infty}\liminf_{k\to\infty}\int_{B_R(0)}e^{2m\tilde u_k}dx\\
&=&\lim_{R\to\infty}\liminf_{k\to\infty}\int_{B_{R\mu_k}(x_k)}e^{2m\hat u_k}dx \leq\int_{\Omega}e^{2m\hat u_k}dx\leq C.
\end{eqnarray*}
If $m=1$, then our claim follows directly from \cite{CL}. Assume now $m>1$.  From Theorem 2 in \cite{mar1} we get that either
\begin{equation}\label{eqstsol}
\tilde u_0=\log\frac{2\lambda}{1+\lambda^2\abs{x-x_0}^2}
\end{equation}
for some $\lambda>0$ and $x_0\in\R{2m}$, or there exists $j\in\{1,\ldots,m-1\}$ such that 
\begin{equation}\label{deltau}
 \Delta^j\tilde u_0(x)\rightarrow a\text{ as }\abs x\rightarrow +\infty, 
\end{equation}
for some constant $a<0$. On the other hand, \eqref{deltau}
implies that for every $R>0$ large enough there is $k(R)\in\mathbb{N}$ such that
$$\int_{B_R(0)}|\Delta^j \tilde u_k|dx\geq \frac{|a|}{2}|B_R(0)|\geq \frac{R^{2m}}{C},\quad \text{for }k\geq k(R).$$
This contradicts \eqref{deltaj} in the limit as $R\to 0$, whence \eqref{eqstsol} has to hold.
Since $\tilde u_k(0)=\max_{\Omega_k}\tilde u_k=\log 2$, the same facts hold for $\tilde u_0$. Therefore $x_0=0$ and $\lambda=1$ in \eqref{eqstsol}. This proves our second claim, hence the lemma.
\end{proof}

\begin{lemma}\label{hp} There are $N>0$ converging sequences $x_{k,i}\to x^{(i)}$, $1\leq i\leq N$,  with $\lim_{k\to\infty}u_k(x_{k,i})=\infty$ such that, setting
\begin{equation}\label{ki2}
\tilde u_{k,i}(x):=\hat u_k(x_{k,i}+\mu_{k,i}x)+\log\mu_{k,i},\quad 
\mu_{k,i}:=2e^{-\hat u_k(x_{k,i})},\\
\end{equation}
we have
\begin{enumerate}
 \item[($A_1$)] $\lim_{k\to\infty}\frac{\abs{x_{k,i} - x_{k,j}}}{\mu_{k,i}} +\infty$ for $1\leq i\neq j\leq N$,
 \item[($A_2$)] $\lim_{k\to \infty}\frac{\dist(x_{k,i},\de\Omega)}{\mu_{k,i}}=+\infty$, for $1\leq i\leq N$
 \item[($A_3$)] $\tilde u_{k,i}\rightarrow \eta_0$ in $C^{2m-1,\alpha}_{\loc}(\R{2m})$, for $1\leq i\leq N$, where $\eta_0(x)=\log\Big(\frac{2}{1+\abs{x}^2}\Big)$.
\item[($A_4$)] For $1\leq i\leq N$
\begin{equation}\label{energiai}
\lim_{R\to\infty}\lim_{k\to\infty}\int_{B_{R\mu_{k,i}}(x_{k,i})}e^{2m\hat u_k}dx=|S^{2m}|.
\end{equation}
\item[($A_5$)] $\inf_{1\leq i\leq N}|x-x^{(i)}|^{2m} e^{2m\hat u_k(x)}\leq C$ for every $x\in \Omega$.
\end{enumerate}
\end{lemma}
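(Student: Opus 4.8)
\medskip
\noindent\emph{Proof proposal.} The plan is an iterative \emph{bubble extraction}: we pick the concentration points $x_{k,i}$ one at a time, starting from the global maximum point and using the points already chosen to \emph{localize} the search for the next one. For the first point we take $x_{k,1}$ to be a maximum point of $u_k$ on $\overline\Omega$; by \eqref{ukinfty}, $u_k(x_{k,1})\to\infty$, so $(A_2)$ for $i=1$ is Lemma \ref{claim3} and $(A_3)$ for $i=1$ is Lemma \ref{claim4}, while $(A_1)$ is empty at this stage. Property $(A_4)$ for $i=1$ then follows from $(A_3)$: the change of variable $y=x_{k,1}+\mu_{k,1}x$ turns $\int_{B_{R\mu_{k,1}}(x_{k,1})}e^{2m\hat u_k}dy$ into $\int_{B_R(0)}e^{2m\tilde u_{k,1}}dx$, which tends to $\int_{B_R(0)}e^{2m\eta_0}dx$ by $(A_3)$; letting $R\to\infty$ and using \eqref{vol} gives \eqref{energiai}.

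For the inductive step, assume $x_{k,1},\dots,x_{k,n}$ (with limits $x^{(i)}$) are chosen so that $(A_1)$--$(A_4)$ hold, and set
$$r_{k,n}(x):=\min\!\Big(\dist(x,\de\Omega),\ \min_{1\le i\le n}|x-x_{k,i}|\Big),\qquad T_{k,n}:=\sup_{x\in\overline\Omega}r_{k,n}(x)^{2m}e^{2m\hat u_k(x)}.$$
If, along a subsequence, $T_{k,n}\le C$, we stop with $N=n$; this bound is essentially $(A_5)$, the passage to the stated form (with the limit points $x^{(i)}$ in place of the moving centres $x_{k,i}$) being routine using $x_{k,i}\to x^{(i)}$ together with the uniform bound on $\hat u_k$ away from $\{x^{(1)},\dots,x^{(N)}\}$ and, via \eqref{dir} and elliptic estimates, near $\de\Omega$. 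If instead $T_{k,n}\to\infty$, then since $r_{k,n}^{2m}e^{2m\hat u_k}$ vanishes on $\de\Omega$ the supremum is attained at some $x_{k,n+1}\in\Omega$; as $r_{k,n}$ is bounded, $\hat u_k(x_{k,n+1})\to\infty$, hence $\mu_{k,n+1}\to0$ and $u_k(x_{k,n+1})=\hat u_k(x_{k,n+1})+\alpha_k\to\infty$ by Lemma \ref{claim1}. From $e^{2m\hat u_k(x_{k,n+1})}=(2/\mu_{k,n+1})^{2m}$ we read off $r_{k,n}(x_{k,n+1})=\tfrac12 T_{k,n}^{1/(2m)}\mu_{k,n+1}$, and since $r_{k,n}(x_{k,n+1})\le\dist(x_{k,n+1},\de\Omega)$ and $r_{k,n}(x_{k,n+1})\le|x_{k,n+1}-x_{k,i}|$ for $i\le n$, this gives at once $(A_2)$ for $i=n+1$ and one half of $(A_1)$; the other half ($|x_{k,n+1}-x_{k,i}|/\mu_{k,i}\to\infty$) follows from $(A_3)$ at the earlier points, which implies that a point at bounded rescaled distance from $x_{k,i}$ has natural scale comparable to $\mu_{k,i}$, incompatible with the first half.

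The heart of the step is $(A_3)$ for $x_{k,n+1}$. Using that $r_{k,n}$ is $1$-Lipschitz and $r_{k,n}(x_{k,n+1})\gg\mu_{k,n+1}$, one gets $r_{k,n}(x_{k,n+1}+\mu_{k,n+1}z)=\tfrac12 T_{k,n}^{1/(2m)}\mu_{k,n+1}(1+o(1))$ uniformly for $|z|\le R$; feeding this into the maximality bound $r_{k,n}(\cdot)^{2m}e^{2m\hat u_k}\le T_{k,n}$ yields, after a short computation, $\tilde u_{k,n+1}(z)\le\log 2+o(1)$ on $B_R(0)$. With this a priori upper bound, and with $(A_2)$ ensuring that the rescaled domains exhaust $\R{2m}$, we run the argument of Lemma \ref{claim4}: apply Theorem \ref{main} (or Theorem 3 of \cite{BM} when $m=1$); rule out the vanishing case using $\tilde u_{k,n+1}(0)=\log 2$; rule out case (ii-b) and the alternative $\Delta^j\tilde u_0\to a<0$ by \eqref{nablai} after rescaling; classify the blow-up limit via Theorem 2 of \cite{mar1} (and \cite{CL} for $m=1$); and pin down $x_0=0$, $\lambda=1$ from $\tilde u_{k,n+1}(0)=\log 2=\max$. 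This gives $(A_3)$, hence $(A_4)$ exactly as in the first step, for $i=n+1$, and we iterate. Finally the process terminates, because by $(A_1)$ the balls $B_{R\mu_{k,i}}(x_{k,i})$, $1\le i\le n$, are pairwise disjoint for $k$ large, so by \eqref{eq3.2}
$$\sum_{i=1}^n\int_{B_{R\mu_{k,i}}(x_{k,i})}e^{2m\hat u_k}dx\le\int_\Omega e^{2m\hat u_k}dx=\frac{\rho_k}{(2m-1)!}\le\frac{C}{(2m-1)!},$$
and letting $k\to\infty$, then $R\to\infty$, and using $(A_4)$, we get $n\,|S^{2m}|\le C/(2m-1)!$, a bound independent of $n$.

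The main obstacle is the verification of $(A_3)$ at each newly selected point. Since $x_{k,n+1}$ is not a global maximum, Lemmas \ref{claim3}--\ref{claim4} do not apply directly, and the only purpose of the localizing radius $r_{k,n}$ is to recover, from its maximality, the a priori bound $\tilde u_{k,n+1}\le\log 2+o(1)$ on compact sets, which is exactly what makes the delicate classification of Lemma \ref{claim4} (resting in turn on the concentration-compactness Theorem \ref{main}, the classification of \cite{mar1}, and the gradient bounds of Lemma \ref{claim2}) applicable. The remaining technical points --- the second half of $(A_1)$ and the passage from $T_{k,N}\le C$ to the stated form of $(A_5)$ --- are routine once $(A_3)$ is in hand.
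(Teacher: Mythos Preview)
Your inductive bubble-extraction scheme is exactly the one the paper uses, and the verification of $(A_1)$--$(A_4)$ at each stage, the termination argument, and the reduction of $(A_3)$ to the machinery of Lemmas \ref{claim3}--\ref{claim4} via a local upper bound on the rescaled functions are all carried out the same way. There is, however, one substantive difference: the paper's selection function is
\[
w_k(x)=\inf_{1\le i\le \ell}|x-x_{k,i}|^{2m}e^{2m\hat u_k(x)},
\]
\emph{without} the factor $\dist(x,\partial\Omega)$ that you insert into $r_{k,n}$. Your choice buys $(A_2)$ for free at the new point (the paper instead re-applies the argument of Lemma \ref{claim3}), but it weakens the stopping criterion: $T_{k,N}\le C$ only says $\min\bigl(\dist(x,\partial\Omega),\min_i|x-x_{k,i}|\bigr)^{2m}e^{2m\hat u_k(x)}\le C$, which near $\partial\Omega$ allows $e^{2m\hat u_k}$ to blow up at the rate $\dist(\,\cdot\,,\partial\Omega)^{-2m}$ even far from all $x^{(i)}$. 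This is strictly weaker than $(A_5)$, and in fact the weaker bound would not be integrable near $\partial\Omega$, so it would not suffice for the Green-function estimates of Lemma \ref{claim7}.

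Your proposed bridge to $(A_5)$ --- ``the uniform bound on $\hat u_k$ away from $\{x^{(1)},\dots,x^{(N)}\}$ and, via \eqref{dir} and elliptic estimates, near $\partial\Omega$'' --- is circular: the uniform bound away from the concentration set is precisely what $(A_5)$ delivers, and elliptic estimates up to the boundary require control of $(-\Delta)^m u_k=(2m-1)!e^{2m\hat u_k}$, which is again the quantity in question. The fix is immediate: drop $\dist(x,\partial\Omega)$ from $r_{k,n}$, as the paper does. The supremum of $w_k$ is still attained in $\Omega$ (on $\partial\Omega$ one has $\hat u_k=-\alpha_k\le C$ by Lemma \ref{claim1}, so $w_k$ is bounded there), the stopping condition $\sup w_k\le C$ is then $(A_5)$ with $x_{k,i}$ in place of $x^{(i)}$ (which is the form actually used in Lemma \ref{claim7}), and $(A_2)$ at the new point is recovered by repeating the computation of Lemma \ref{claim3} once the local bound $e^{2mv_k}\le 2^{2m}(1-\varepsilon)^{-2m}$ is in hand.
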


\begin{proof} We proceed inductively. 

\medskip

\noindent\emph{Step 1.}
For $N=1$, choose $x_{k,1}$ such that $u_k(x_{k,1})=\sup_\Omega u_k$. Then Lemma \ref{claim3} and Lemma \ref{claim4} imply that $(x_{k,1})$ satisfies $(A_2)$ and $(A_3)$. Moreover $(A_1)$ is empty and $(A_4)$ follows at once from $(A_3)$ \eqref{vol}. If also $(A_5)$ is satisfied, we are done. Otherwise we construct a new sequence, as in the inductive step below.

\medskip

\noindent\emph{Step 2.} Assume that $\ell$ sequences $\{(x_{k,i})\to x^{(i)}:1\leq i\leq \ell$\}, have been constructed so that they satisfy $(A_1)$, $(A_2)$, $(A_3)$ and $(A_4)$, but not $(A_5)$.
Set
$$w_k(x):=\inf_{1\leq i\leq \ell}|x-x_{k,i}|^{2m}e^{2m\hat u_k(x)},$$
so that $\lim_{k\to\infty}\sup_\Omega w_k=\infty$, and choose $y_k\in\Omega$ such that $w_k(y_k)=\sup_{\Omega}w_k$.
Then $y_k\to y$ up to a subsequence. Also set
\begin{equation}\label{eqclaim5}
\gamma_k=2e^{-\hat u_k(y_k)},\qquad v_k(x)=\hat u_k(y_k+\gamma_k x)+\log \gamma_k.
\end{equation}
We claim that $(A_1)$, $(A_2)$, $(A_3)$ and $(A_4)$ hold for the $\ell+1$ sequences
$$\{(x_{k,i})\to x^{(i)}:1\leq i\leq \ell+1\},$$
if we set
$$
\left\{
\begin{array}{l}
x_{k,\ell+1}:=y_k\\
x^{(\ell+1)}:=y\\
\tilde u_{k,\ell+1}:=v_k\\
\mu_{k,\ell+1}:=\gamma_k
\end{array}
\right.
$$
Since $w_k(y_k)\rightarrow +\infty$ we get
$$\lim_{k\to\infty}\frac{|y_k-x_{k,i}|}{\gamma_k}\geq \lim_{k\to\infty}\frac{w_k(y_k)^\frac{1}{2m}}{2}=+\infty\quad\text{for } 1\leq i\leq \ell.$$
We claim that we also have
$$\lim_{k\to\infty}\frac{\abs{y_k-x_{k,i}}}{\mu_{k,i}}=+\infty\quad \text{for }1\leq i\leq \ell.$$ Indeed, setting $\theta_{k,i}:=\frac{y_k-x_{k,i}}{\mu_{k,i}}$, we have
$$
 \abs{y_k-x_{k,i}}^{2m}e^{2m\hat u_k(y_k)}=\abs{\theta_{k,i}}^{2m}\exp(2m[\hat u_k(x_{k,i} +\mu_{k,i}\theta_{k,i})+\log\mu_{k,i}]).
$$
If our claim were false, then the right-hand side would be bounded thanks to $(A_3)$, but then we would have
$w_k(y_k)\leq C,$
against our assumption. This proves $(A_1)$.
Fix now $\ve, R>0$. Since $\max w_k$ is attained at $y_k$, and using \eqref{eqclaim5}, we have 
\begin{equation}
 e^{2mv_k(x)}\leq 2^{2m}\frac{\inf_{1\leq i\leq \ell}|y_k-x_{k,i}|^{2m}}{\inf_{1\leq i\leq \ell}|y_k+\gamma_k x-x_{k,i}|^{2m}}.
\end{equation}
Choose $k(\ve,R)$ such that $\abs{y_k - x_{k,i}}\geq\frac{R}{\ve}\gamma_k$ for $k\geq k(\ve,R)$ and $1\leq i\leq \ell$. Then
$$\frac{\abs{y_k-x_{k,i}}}{\abs{y_k-x_{k,i} +\gamma_k x}}\leq \frac{1}{1-\ve}\quad  \text{for } x\in B_R(x), \;k\geq k(\ve,R),\;1\leq i\leq \ell,$$
hence
$$e^{2mv_k(x)}\leq \frac{2^{2m}}{(1-\ve)^{2m}} \quad \text{for }x\in B_R(0),\; k\geq k(\ve,R).$$
With this information, we can apply the proofs of Lemma \ref{claim3} and Lemma \ref{claim4} to get $(A_2)$ and $(A_3)$ for $i=\ell+1$. Finally, $(A_4)$ follows from $(A_3)$.

\medskip

\noindent\emph{Step 3.} The procedure has to stop, i.e. $(A_5)$ has to be satisfied after a finite number of inductive steps. Indeed at the $\ell$-th steps we get
\begin{eqnarray*}
\lim_{k\to\infty}\int_\Omega e^{2m\hat u_k}dx&\geq&\lim_{R\to\infty}\lim_{k\to\infty}\sum_{i=1}^\ell\int_{B_{R\mu_{k,i}}(x_{k,i})}e^{2m\hat u_k(y)}dy\\
&=&\lim_{R\to\infty}\lim_{k\to\infty} \sum_{i=1}^\ell\int_{B_{R}(0)}e^{2m\tilde u_{k,i}(y)}dy\\
&=&\ell\int_{\R{2m}}e^{2m \eta_0}dx=\ell|S^{2m}|,
\end{eqnarray*}
which, together with \eqref{eq3.2}, gives an upper bound for $\ell$. Setting $N$ to be the $\ell$ at which our inductive procedure stops, we conclude.
\end{proof}

From now on, the $N$ converging sequences
$$\{x_{k,i}\to x^{(i)}:1\leq i\leq N\} $$ produced with Lemma \ref{hp} will be fixed and we shall set
\begin{equation}\label{S}
S:=\{x^{(i)}:1\leq i\leq N\}.
\end{equation}

\begin{lemma}\label{claim7}
For $\ell\in\{1,\ldots,2m-1\}$ there exists $C>0$ such that
\begin{equation}
\inf_{1\leq i\leq \ell}|x-x_{k,i}|^\ell\abs{\nabla^\ell\hat u_k(x)}\leq C,\text{ for }x\in\Omega.
\end{equation}
\end{lemma}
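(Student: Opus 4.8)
The plan is to derive this from the pointwise bound $(A_5)$ on $e^{2m\hat u_k}$ by a direct estimate of the Green representation of $\hat u_k$, exactly in the spirit of the proofs of Lemma \ref{claim2} and Lemma \ref{claim3}. Differentiating \eqref{eq3.3} $\ell$ times in $x$ (the constant $\alpha_k$ drops out), using the symmetry $G_x(y)=G_y(x)$ and the estimate \eqref{DAS}, one gets
\begin{equation*}
|\nabla^\ell\hat u_k(x)|\leq C\int_\Omega\frac{e^{2m\hat u_k(y)}}{|x-y|^\ell}\,dy,\qquad x\in\Omega .
\end{equation*}
Since $\inf_{1\leq i\leq N}|x-x_{k,i}|^\ell=(2r)^\ell$ with $r:=\tfrac12\inf_{1\leq i\leq N}|x-x_{k,i}|$ (and the assertion is trivial when this infimum vanishes), it then suffices to show
\begin{equation*}
r^\ell\int_\Omega\frac{e^{2m\hat u_k(y)}}{|x-y|^\ell}\,dy\leq C,
\end{equation*}
with $C$ independent of $x\in\Omega$ and of $k$.

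I would split this integral over $B_r(x)$ and over $\Omega\setminus B_r(x)$. On $\Omega\setminus B_r(x)$ one has $|x-y|\geq r$, hence $r^\ell|x-y|^{-\ell}\leq 1$, so by \eqref{eq3.2}
\begin{equation*}
r^\ell\int_{\Omega\setminus B_r(x)}\frac{e^{2m\hat u_k(y)}}{|x-y|^\ell}\,dy\leq\int_\Omega e^{2m\hat u_k}\,dy\leq C .
\end{equation*}
On $B_r(x)$ the triangle inequality gives $|y-x_{k,i}|\geq|x-x_{k,i}|-r\geq r$ for every $i$, so by $(A_5)$ (in the form $\inf_i|y-x_{k,i}|^{2m}e^{2m\hat u_k(y)}\leq C$, which is precisely the negation of the failure condition used in the proof of Lemma \ref{hp}) we have $e^{2m\hat u_k(y)}\leq Cr^{-2m}$ on $B_r(x)$; since $\ell\leq 2m-1<2m$ the singularity $|x-y|^{-\ell}$ is integrable with $\int_{B_r(x)}|x-y|^{-\ell}\,dy\leq Cr^{2m-\ell}$, whence
\begin{equation*}
r^\ell\int_{B_r(x)}\frac{e^{2m\hat u_k(y)}}{|x-y|^\ell}\,dy\leq Cr^{\ell-2m}\int_{B_r(x)}\frac{dy}{|x-y|^\ell}\leq C .
\end{equation*}
Adding the two contributions completes the argument.

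I expect no genuine obstacle here: once $(A_5)$ is in hand the proof is a routine splitting of a Riesz-type potential. The two places needing a little care are the justification of differentiation under the integral sign — identical to what is already used for Lemmas \ref{claim2}--\ref{claim3}, and valid precisely because $\ell<2m$ makes $|x-y|^{-\ell}$ locally integrable — and the harmless bookkeeping between the fixed limit points $x^{(i)}$ appearing in the statement of $(A_5)$ and the moving centres $x_{k,i}$ used above, which causes no trouble since $x_{k,i}\to x^{(i)}$. It is worth emphasizing in the write-up that the restriction $\ell\in\{1,\dots,2m-1\}$ is exactly what is needed for the near-field integral to converge with the correct scaling.
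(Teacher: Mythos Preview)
Your argument is correct and in fact cleaner than the paper's. You use the same two ingredients --- the Green-function derivative bound \eqref{DAS} yielding \eqref{int0}, and the pointwise control from $(A_5)$ together with the total mass bound \eqref{eq3.2} --- but organize the estimate differently. The paper partitions $\Omega$ into the Voronoi-type regions $\Omega_{k,j}=\{y:\dist(y,\{x_{k,1},\dots,x_{k,N}\})=|y-x_{k,j}|\}$, and for each $j$ performs a three-way split around $x_{k,j}$ (near $x_{k,j}$; far from $x_{k,j}$ but near $x$; far from both), applying $(A_5)$ on the cell to get $e^{2m\hat u_k(y)}\leq C|y-x_{k,j}|^{-2m}$. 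Your single split $B_r(x)\cup(\Omega\setminus B_r(x))$ with $r=\tfrac12\inf_i|x-x_{k,i}|$ avoids this bookkeeping entirely: the far region is handled in one line by \eqref{eq3.2}, and the near region by the observation that all concentration points lie at distance at least $r$ from $B_r(x)$, so $(A_5)$ gives the uniform bound $e^{2m\hat u_k}\leq Cr^{-2m}$ there. The paper's decomposition has the minor advantage of making transparent \emph{which} concentration point governs the bound at a given $y$, but for the purposes of this lemma your route is shorter and loses nothing. Your remark that the proof of Lemma~\ref{hp} actually delivers $(A_5)$ with the moving centers $x_{k,i}$ (rather than the limits $x^{(i)}$ as literally written) is exactly right and is what the paper's own proof of Lemma~\ref{claim7} silently uses as well.
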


\begin{proof}
As already noticed, we can use \eqref{eq3.3}, \eqref{DAS} and the symmetry of $G$ to get
\begin{equation}\label{int0}
|\nabla^\ell\hat u_k(x)|\leq C\int_\Omega \frac{e^{2m\hat u_k(y)}}{|x-y|^\ell}dy.
\end{equation}
Let $\Omega_{k,i}:=\{x\in\Omega:\;\dist(x,\{x_{k,1},\ldots,x_{k,N}\})=\abs{x-x_{k,i}}\}$, fix $x\in \Omega_{k,i}$, and write
\begin{equation}\label{int}
\int_{\Omega_{k,i}}\frac{e^{2m\hat u_k(y)}}{\abs{x-y}^\ell}dy=\int_{\Omega_{k,i}\cap B_{k,i}}\frac{e^{2m\hat u_k(y)}}{\abs{x-y}^\ell}dy 
+ \int_{\Omega_{k,i}\setminus B_{k,i}}\frac{e^{2m\hat u_k(y)}}{\abs{x-y}^\ell}dy,
\end{equation}
where $B_{k,i}:=B_{\frac{|x-x_{k,i}|}{2}}(x_{k,i})$. By Property $(A_5)$ we get
\begin{eqnarray}
e^{2m\hat u_k(y)}&\leq& C\abs{y-x_{k,i}}^{-2m} \quad \text{for }y\in\Omega_{k,i}\setminus B_{k,i}\label{*1}\\
|x-y|&\geq&\frac{1}{2}\abs{x-x_{k,i}}\quad \text{for }y\in\Omega_{k,i}\cap B_{k,i}.\label{*2}
\end{eqnarray}
Then, using \eqref{eq3.2} and \eqref{*1}, we get
\begin{equation}\label{int2}
\int_{\Omega_{k,i}\cap B_{k,i}}\frac{e^{2m\hat u_k(y)}}{\abs{x-y}^\ell}dy\leq \frac{C}{|x-x_{k,i}|^{\ell}}.
\end{equation}
As for the last integral in \eqref{int}, we write $\Omega_{k,i}\setminus B_{k,i}=\Omega_{k,i}^{(1)}\cup \Omega_{k,i}^{(2)}$, where
$$\Omega_{k,i}^{(1)}=(\Omega_{k,i}\backslash B_{k,i})\cap B_{2|x-x_{k,i}|}(x),\quad \Omega_{k,i}^{(2)}=(\Omega_{k,i}\backslash B_{k,i})\backslash B_{2|x-x_{k,i}|}(x).$$
Then straightforward computations and \eqref{*2} imply
\begin{eqnarray*}
\int_{\Omega_{k,i}\backslash B_{k,i}}\frac{e^{2m\hat u_k(y)}dy}{|x-y|^\ell}&\leq&C\int_{\Omega^{(1)}_{k,i}}\frac{dy}{|y-x_{k,i}|^{2m}|x-y|^\ell}\\
&&+C\int_{\Omega^{(2)}_{k,i}}\frac{dy}{|y-x_{k,i}|^{2m}|x-y|^\ell}\\
&\leq&\frac{C}{|x-x_{k,i}|^{2m}}\int_{\Omega_{k,i}^{(1)}}\frac{dy}{|x-y|^\ell}+C\int_{\Omega_{k,i}^{(2)}}\frac{dy}{|y-x_{k,i}|^{2m+\ell}}\\
&\leq&\frac{C}{|x-x_{k,i}|^\ell}.
\end{eqnarray*}
Summing up with \eqref{int0}, \eqref{int} and \eqref{int2}, the proof is complete. 
\end{proof}

\begin{lemma}\label{claim10} Up to a subsequence, we have
$$\lim_{k\to\infty}\alpha_k=+\infty.$$
\end{lemma}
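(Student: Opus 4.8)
The plan is to argue by contradiction. Suppose the statement fails; then along some subsequence $\alpha_k\le C$, and by Lemma~\ref{claim1}, after passing to a further subsequence, $\alpha_k\to\alpha_\infty\in\R{}$. I would then play two facts against each other: away from the blow-up set $S$ of \eqref{S} the functions $\hat u_k$ converge to a solution of the Liouville equation \eqref{eqhat} with \emph{finite} energy, whereas the bubbling of Lemma~\ref{hp} forces that limit to have a non-integrable exponential near each $x^{(i)}$.

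First I would establish that, up to a subsequence, $\hat u_k\to\hat u_0$ in $C^{2m-1,\alpha}_{\loc}(\Cl\Omega\bs S)$, where $(-\Delta)^m\hat u_0=(2m-1)!e^{2m\hat u_0}$ in $\Omega\bs S$ and $\int_{\Omega\bs S}e^{2m\hat u_0}dx\le C$. Indeed, by $(A_5)$ the right-hand side $(2m-1)!e^{2m\hat u_k}$ of \eqref{eqhat} is bounded in $L^\infty_{\loc}(\Cl\Omega\bs S)$; \eqref{eq3.4} together with the boundedness of $\alpha_k$ gives $\|\hat u_k\|_{L^1(\Omega)}\le C$; and $\hat u_k=-\alpha_k$ (bounded), $\de_\nu\hat u_k=\dots=\de_\nu^{m-1}\hat u_k=0$ on $\de\Omega$. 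Interior and boundary elliptic estimates then give the convergence, and the energy bound follows from Fatou's lemma and \eqref{eq3.2}. In particular $e^{2m\hat u_0}\in L^1$ on a neighbourhood of each $x^{(i)}$.

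Next I would analyse the concentration near a fixed $x^{(i)}$. The measures $\nu_k:=e^{2m\hat u_k}dx$ have uniformly bounded total mass $\rho_k/(2m-1)!$, so $\nu_k\rightharpoonup\nu$ weakly-$*$ on $\Cl\Omega$ up to a subsequence; by $(A_1)$ the balls $B_{R\mu_{k,i}}(x_{k,i})$ are eventually pairwise disjoint and shrink to $x^{(i)}$, so $(A_4)$ gives $\nu(\{x^{(i)}\})\ge|S^{2m}|$. Passing to the limit in Green's representation \eqref{eq3.3} — legitimate because the logarithmic singularity of $y\mapsto G_x(y)$ at $y=x$ is harmless thanks to the $L^\infty_{\loc}(\Cl\Omega\bs S)$ bound on $\nu_k$, while $G_x$ is continuous on the rest of the support of $\nu$ — one obtains, for $x$ near $x^{(i)}$,
$$\hat u_0(x)+\alpha_\infty=(2m-1)!\int_\Omega G_x\,d\nu\ \ge\ (2m-1)!\,\nu(\{x^{(i)}\})\,G_{x^{(i)}}(x)-C,$$
using that $G_x(y)\ge -C$ uniformly (its leading term $c_m\log\frac1{|x-y|}$ has $c_m>0$ and is bounded below on $\Omega$, and the regular part of the polyharmonic Green function is bounded on the smooth domain $\Omega$), that the remaining atoms of $\nu$ contribute a bounded amount for $x$ near $x^{(i)}$, and that $(2m-1)!\int_\Omega G_x(y)e^{2m\hat u_0(y)}dy\ge -C$ by the energy bound just proved. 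Since $G_{x^{(i)}}(x)=c_m\log\frac1{|x-x^{(i)}|}+O(1)$ with $(2m-1)!\,|S^{2m}|\,c_m=2$ (the normalization of the fundamental solution of $(-\Delta)^m$ in dimension $2m$, matched against \eqref{vol}), this gives $\hat u_0(x)\ge 2\log\frac1{|x-x^{(i)}|}-C$ near $x^{(i)}$, whence $e^{2m\hat u_0(x)}\ge C^{-1}|x-x^{(i)}|^{-4m}$, which is not integrable near $x^{(i)}$ since $4m>2m$. This contradicts the finite-energy bound on $\hat u_0$, so $\alpha_k\to+\infty$.

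The delicate part is this last step: carrying out the passage to the limit in Green's representation near the concentration points and extracting from it the logarithmic lower bound for $\hat u_0$. It requires precise information on the polyharmonic Green function $G_x$ — boundedness of its regular part on $\Omega$ and positivity of $c_m$ — and it must be made to work even if some $x^{(i)}$ happens to lie on $\de\Omega$, where $G_{x^{(i)}}$ degenerates; in that case one either first appeals to the fact that blow-up points are interior, or uses the scale separation $(A_2)$ together with the gradient estimate of Lemma~\ref{claim7} to still force $\hat u_0$ to blow up near $x^{(i)}$. The remaining ingredients — elliptic estimates, Fatou's lemma, weak-$*$ compactness of bounded measures — are routine.
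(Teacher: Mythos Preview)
Your argument is essentially correct and rather elegant when the concentration point $x^{(i)}$ lies in the interior of $\Omega$: the Green-representation lower bound
\[
\hat u_0(x)\ \ge\ (2m-1)!\,\nu(\{x^{(i)}\})\,G_{x^{(i)}}(x)-C\ \ge\ 2\log\frac{1}{|x-x^{(i)}|}-C
\]
does force $e^{2m\hat u_0}\notin L^1$ near $x^{(i)}$, contradicting the Fatou bound. In effect you have given an alternative proof of the paper's Step~1 conclusion that, under the contradiction hypothesis $\alpha_k\to\alpha_\infty\in\R{}$, one must have $S\subset\partial\Omega$.

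The gap is the boundary case, and it is not a technicality but the whole difficulty. If $x^{(i)}\in\partial\Omega$, then $G_y(x^{(i)})=0$ for every $y\in\Omega$ by the Dirichlet condition, so the atom at $x^{(i)}$ contributes nothing to your lower bound and no logarithmic blow-up of $\hat u_0$ is produced. Your first proposed fix---``appeal to the fact that blow-up points are interior''---is circular: that fact is Lemma~\ref{claim11}, whose proof (via Lemma~\ref{claim9}) uses $\alpha_k\to+\infty$, i.e.\ the very statement you are proving. Your second fix, invoking $(A_2)$ and Lemma~\ref{claim7}, does not work either: $(A_2)$ only says $\dist(x_{k,i},\partial\Omega)\gg\mu_{k,i}$, which is perfectly compatible with $x_{k,i}\to x^{(i)}\in\partial\Omega$, and Lemma~\ref{claim7} gives \emph{upper} bounds on derivatives, not the lower bound you would need to force $\hat u_0\to+\infty$. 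In fact the paper's Step~1 (via Theorem~\ref{main}) shows that under the contradiction hypothesis $S$ lies \emph{entirely} on $\partial\Omega$, so the case your argument cannot handle is the only one that actually occurs.

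The paper closes this gap with a Pohozaev-type identity (Lemma~\ref{poho}) applied on $\Omega\cap B_r(x_0)$ with a carefully chosen center $y_{k,r}$ that kills the dangerous boundary term $\int_{\partial\Omega\cap B_r}(x-y_{k,r})\cdot\nu|\Delta^{m/2}u_k|^2$; all remaining terms are $O(r^{2m-1})$ thanks to the smoothness of the limit $u_0$ on $\overline\Omega$, which contradicts $(A_4)$ as $r\to 0$. This Pohozaev argument is the missing key idea; without it, or something playing the same role, the proof is incomplete.
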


\begin{proof} We argue by contradiction. Suppose $\lim_{k\to\infty}\alpha_k=\alpha_0\in\R{}$.

\medskip

\noindent\emph{Step 1.} We claim that $S\subset\de\Omega$, where $S$ is as in \eqref{S}, and there is a function $u_0\in C^{2m-1,\alpha}(\overline \Omega)$ such that
$$u_k \to u_0\quad \text{in } C^{2m-1,\alpha}_{\loc}(\overline \Omega\backslash S).$$ Moreover $u_0$ satisfies
\begin{equation}\label{u02}
\left\{\begin{array}{ll}
(-\Delta)^mu_0=(2m-1)!e^{-2m\alpha_0}e^{2mu_0}\text{ in }\Omega\\
u_0=\de_\nu u_0=\ldots=\de^{m-1}_{\nu}u_0=0\text{ in }\de\Omega
\end{array}\right.
\end{equation}
Indeed \eqref{eq3.4} and the assumption that $\alpha_k\to \alpha_0$ imply that
\begin{equation}\label{stimaL1}
\norm{\hat u_k}_{L^1(\Omega)}\leq C.
\end{equation}
Since $\hat u_k$ satisfies \eqref{eqhat} and \eqref{eq3.2}, we can apply Theorem \ref{main} from the appendix. This implies that one of the following is true
\begin{itemize}
\item[(i)] Up to a subsequence, $\hat u_k\to \hat u_0$ in $C^{2m-1,\alpha}_{\loc}(\Omega)$.
\item[(ii)] Up to a subsequence $\hat u_k\to -\infty$ locally uniformly in $\Omega\backslash\Omega_0$ for a set $\Omega_0$ of Hausdorff dimension at most $2m-1$.
\end{itemize}
Clearly case (ii) contradicts \eqref{stimaL1}, hence case (i) occurs and $S\subset\de\Omega$. Using the boundary condition, Lemma \ref{claim7}, and elliptic estimates, we actually infer that $\hat u_k\to \hat u_0$ in $C^{2m-1,\alpha}_{\loc}(\overline \Omega\backslash S)$. Then clearly $u_k\to u_0:=\hat u_0+\alpha_0$ in $C^{2m-1,\alpha}_{\loc}(\overline \Omega\backslash S)$ and $u_0$ satisfies \eqref{u02}.

We finally want to prove that $u_0$ is continuous in $\overline\Omega$, hence smooth.
In the limit as $k\to \infty$, Lemma \ref{claim7} implies
$$\inf_{1\leq i\leq N}|x-x^{(i)}||\nabla u_0(x)|\leq C \quad \text{for } x\in \Omega\setminus S.$$
Fix $x^{(i)}\in S$ and $\delta>0$ such that
$$|x-x^{(i)}|\abs{\nabla u_0(x)}\leq C \quad \text{for } x\in \Omega\cap B_\delta(x^{(i)})\setminus \{x^{(i)}\}.$$
Then there is a constant $C>0$ such that
$$|u(x)-u(y)|\leq C\quad \text{for }x,y\in\Omega\cap B_\delta(x^{(i)})\setminus \{x^{(i)}\},\;|x-x^{(i)}|=|y-x^{(i)}|. $$
By taking $y\in\de\Omega$ and using \eqref{dir}, we obtain that $u$ is bounded near $x^{(i)}$. Then \eqref{u02} and elliptic regularity imply that $u_0\in C^\infty(\overline\Omega)$.

\medskip

\noindent\emph{Step 2.} If $S=\emptyset$, then Step 1 yields $u_k\to u_0$ in $C^{2m-1,\alpha}_{\loc}(\overline \Omega)$, which contradicts the assumption $\sup_\Omega u_k\to+\infty$. If instead there exists $x_0\in S\subset\de\Omega$ and take $\delta>0$ such that $S\cap B_\delta(x_0)=\{x_0\}$. 
Set for $0< r\leq\delta$
\begin{equation}\label{rhokr}
 \rho_{k,r}=\frac
{\int_{\de\Omega\cap B_r(x_0)}
(x-x_0)\cdot\nu(x)|\Delta^\frac{m}{2} u_k|^2}
{\int_{\de\Omega\cap B_r(x_0)}
\nu(x_0)\cdot\nu(x)|\Delta^\frac{m}{2}u_k|^2},
\end{equation}
where for $m$ odd we put $\Delta^\frac{m}{2} u_k:=\nabla \Delta^\frac{m-1}{2}u_k\in\R{2m}$ (compare \eqref{nablam} below), $\nu(x)$ denotes the exterior normal to $\de\Omega$ at $x$, and we assume that the denominator in \eqref{rhokr} does not vanish, otherwise we simply set $\rho_{k,r}=r$. Set also
\begin{equation}\label{ykr}
y_{k,r}:= x_0+\rho_{k,r}\nu(x_0).
\end{equation}
Up to taking $\delta$ even smaller, we may assume that
\begin{equation}\label{1/2}
\frac{1}{2}\leq\nu(x_0)\cdot\nu(x)\leq 1 \quad \text{for } x\in\de \Omega\cap \overline B_r(x_0),\;r\leq\delta,
\end{equation}
hence $|\rho_{k,r}|\leq 2r.$
Applying Lemma \ref{poho} to $u_k$ on the domain $\Omega':=\Omega\cap B_r(x_0)$, with
$$Q=(2m-1)!e^{-2m\alpha_k},\quad y=y_{k,r},$$
and by the property $(A_4)$, we get
\begin{eqnarray}
\Lambda_1&\leq&\lim_{k\to\infty}(2m-1)!\int_{\Omega'}e^{2m\hat u_k}dx\nonumber\\
&=&\lim_{k\to\infty}\frac{(2m-1)!}{2m}\int_{\partial \Omega'}(x-y_{k,r})\cdot\nu_{\Omega'} e^{2m \hat u_k}d\sigma\label{s3m}\\
&&-\lim_{k\to\infty}\frac{1}{2}\int_{\partial \Omega'}(x-y_{k,r})\cdot\nu_{\Omega'} |\Delta^\frac{m}{2} u_k|^2d\sigma  +\lim_{k\to\infty}\int_{\partial \Omega'}f_kd\sigma,\nonumber
\end{eqnarray}
where $f_k$ is defined on $\partial\Omega'$ by
\begin{equation}\label{eqfk}
f_k(x)=\sum_{j=0}^{m-1}(-1)^{m+j+1}\nu_{\Omega'}\cdot\Big(\Delta^\frac{j}{2}((x-y_{k,r})\cdot \nabla u_k(x))\Delta^\frac{2m-1-j}{2}u_k(x)\Big).
\end{equation}
Now write $f_k=f_k^{(1)}+f_k^{(2)}$, where
\begin{equation}\label{f22k}
 f_k^{(2)}(x)=\left\{\begin{array}{ll}
                      \nu_{\Omega'}\cdot\Delta^\frac{m}{2}u_k(x)\, (x-y_{k,r})\cdot \Delta^\frac{m}{2}u_k(x)&\text{ if }m\text{ is odd}\\
                      D^2\Delta^\frac{m-2}{2}u_k(x)(\nu_{\Omega'},x-y_{k,r})\,\Delta^\frac{m}{2}u_k(x)&\text{ if }m\text{ is even,}
                     \end{array}\right.
\end{equation}
where we use the notation $D^2\varphi(x)(\xi,\zeta):=\frac{\de^2 \varphi(x)}{\de x^i\de x^j}\xi^i\zeta^j.$
Using \eqref{form1} below, one can see that
\begin{eqnarray*}
f_k^{(1)}(x)&=&\sum_{j=0}^{m-2}(-1)^{m+j+1}\nu_{\Omega'}\cdot\Big(\Delta^\frac{j}{2}((x-y_{k,r})\cdot \nabla u_k(x))\Delta^\frac{2m-1-j}{2}u_k(x)\Big)\\
&&+\;g_k(x),\\
g_k(x)&=&\left\{\begin{array}{ll}
                 (m-1)\nu_{\Omega'}(x)\cdot \Delta^\frac{m}{2}u_k(x)\,\Delta^\frac{m-1}{2}u_k(x)&\text{ if }m\text{ is odd}\\
                 (m-1)\nu_{\Omega'}(x)\cdot \Delta^\frac{m-1}{2}u_k(x)\,\Delta^\frac{m}{2}u_k(x)&\text{ if }m\text{ is even.}
                \end{array}\right.
\end{eqnarray*}
Notice that \eqref{dir} implies that $\nabla^\ell u_k=0$ on $\de\Omega$ for $0\leq \ell\leq m-1$.
Since each monomial of $f_k^{(1)}$ contains a factor of the form $\partial^\gamma u_k$ for some multi-index $\gamma$ with $|\gamma|\leq m-1$,
we get
$$\int_{\partial\Omega\cap B_r(x_0)}f_k^{(1)}d\sigma=0.$$
We now claim that
\begin{multline}\label{fk22}
\int_{\partial \Omega\cap B_r(x_0)}\bigg[-\frac{1}{2}(x-y_{k,r})\cdot\nu_{\Omega} |\Delta^\frac{m}{2} u_k|^2+f_k^{(2)}\bigg]d\sigma\\
=\frac{1}{2}\int_{\partial \Omega\cap B_r(x_0)}(x-y_{k,r})\cdot\nu_{\Omega} |\Delta^\frac{m}{2} u_k|^2d\sigma.
\end{multline}
It $m$ is odd, $\Delta^\frac{m-1}{2}u_k\equiv 0$ on $\de \Omega$ implies that $\Delta^\frac{m}{2} u_k(x)\perp \de\Omega$ for $x\in\de \Omega$, whence
$$f_k^{(2)}(x)=\nu_{\Omega'}\cdot \Delta^\frac{m}{2}u_k\,(x-y_{k,r})\cdot \Delta^\frac{m}{2}u_k=\nu_{\Omega'}\cdot (x-y_{k,r})|\Delta^\frac{m}{2}u_k|^2, \quad x\in\de\Omega.$$
Then \eqref{fk22} follows.
When $m$ is even, we also have
\begin{equation}\label{nuk}
f_k^{(2)}(x)=\nu_{\Omega'}\cdot (x-y_{k,r})|\Delta^\frac{m}{2}u_k|^2\quad \text{on }\de \Omega.
\end{equation}
To see that, write $U_k:=\Delta^\frac{m-2}{2}u_k$. Then $U_k\equiv 0$ and $\nabla U_k\equiv0$ on $\de \Omega$, hence
$$D^2 U_k(x)=\nu^i_{\Omega}\nu^j_{\Omega} \Delta U_k,$$
\eqref{nuk} is proven and \eqref{fk22} follows.

\medskip

Now, the second integral in \eqref{fk22} must be zero by \eqref{rhokr} and \eqref{ykr}, if the denominator in \eqref{rhokr} does not vanish. If it vanishes, observe that, by \eqref{1/2}
$$
\nu(x_0)\cdot \nu(x) |\Delta^\frac{m}{2}u_k|^2\geq \frac{1}{2}|\Delta^\frac{m}{2}u_k|^2,
$$
therefore we obtain $\Delta^\frac{m}{2}u_k=0$ on $\de\Omega\cap B_r(x_0)$, and also in this case the integrals in \eqref{fk22} vanish.

\medskip

By \eqref{dir} and Lemma \ref{claim1}, we also have
$$\bigg|\frac{(2m-1)!}{2m}\int_{\partial \Omega\cap B_r(x_0)}(x-y_{k,r})\cdot \nu_{\Omega'}e^{2m\hat u_k}  \bigg|\leq C\int_{\partial \Omega\cap B_r(x_0)} r e^{-2m\alpha_k}\leq Cr^{2m}.$$
All the other terms on the right-hand side of \eqref{s3m}, namely the integrals over $\Omega\cap\partial B_r(x_0)$, are bounded by $C r^{2m-1}$ for $0<r\leq\delta$ and $k\geq k(r)$ large enough. Indeed, by Step 1 we have
$$\lim_{k\to\infty} \sup_{\partial B_r(x_0)\cap \Omega }|\nabla^\ell u_k-\nabla^\ell u_0|=0, \quad |\nabla^\ell u_0|\leq C,\quad 0\leq \ell\leq 2m-1.$$
Therefore, taking the limit as $k\to 0$ first and $r\to 0$ then, we infer
$$\Lambda_1\leq Cr^{2m-1}.$$
This gives a contradiction as $r\to 0$, hence completing the proof in the case when $m$ is odd.

\end{proof}

\begin{lemma}\label{claim9}
Up to selecting a subsequence,
\begin{equation}\label{menoinf}
\hat u_k\rightarrow-\infty \quad\text{locally uniformly on }\overline \Omega\setminus S,
\end{equation}
where $S$ is as in \eqref{S}. Moreover
\begin{equation}\label{limuk}
 \lim_{k\rightarrow +\infty}u_k=\sum_{i=1}^N\beta_i G_{x^{(i)}}\text{ in }C^{2m-1,\alpha}_{\loc}(\bar\Omega\setminus S),
\end{equation}
with
\begin{equation}\label{beta}
\beta_i:=(2m-1)!\lim_{\delta\rightarrow 0}\lim_{k\rightarrow\infty}\int_{B_{\delta}(x^{(i)})\cap\Omega}e^{2m\hat u_k}dy,
\end{equation}
and $\beta_i \geq \Lambda_1$, for $1\leq i\leq N$.
\end{lemma}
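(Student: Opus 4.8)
The plan is to establish \eqref{menoinf} first by combining Lemma~\ref{claim10} with Theorem~\ref{main} from the appendix. By Lemma~\ref{claim10} we have $\alpha_k\to+\infty$ along the subsequence, so from $\hat u_k = u_k-\alpha_k$, \eqref{eq3.4} and the boundary condition $\hat u_k=-\alpha_k\to-\infty$ on $\partial\Omega$, the alternative (i) of Theorem~\ref{main} (locally uniform $C^{2m-1,\alpha}$ convergence to a finite limit) is impossible: it would force $\int_\Omega|\hat u_k+\alpha_k|\,dx$ to blow up, contradicting \eqref{eq3.4}, or more directly it would contradict $\hat u_k\to-\infty$ on the boundary together with interior boundedness via the boundary estimates. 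Hence alternative (ii) holds: $\hat u_k\to-\infty$ locally uniformly on $\Omega\setminus\Omega_0$ for some closed nowhere dense $\Omega_0$. The next task is to identify $\Omega_0$ with $S$. The inclusion $S\subset\Omega_0$ (or rather, blow-up can only occur at the $x^{(i)}$) follows because away from $S$ property $(A_5)$ and Lemma~\ref{claim7} give local bounds on $e^{2m\hat u_k}$ and on $|\nabla^\ell\hat u_k|$; combined with $\alpha_k\to\infty$ on the boundary and elliptic estimates up to $\partial\Omega$, using \eqref{eq3.3} one sees $\hat u_k\to-\infty$ locally uniformly on $\overline\Omega\setminus S$. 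This upgrades the convergence to hold up to the boundary, giving \eqref{menoinf}.

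Next I would prove \eqref{limuk}. Writing Green's representation \eqref{eq3.3} for $u_k = \hat u_k+\alpha_k$:
\begin{equation*}
u_k(x) = (2m-1)!\int_\Omega G_x(y)e^{2m\hat u_k(y)}\,dy.
\end{equation*}
The measures $\mu_k := (2m-1)!e^{2m\hat u_k}\,dx$ have total mass $\rho_k\leq C$ by \eqref{eq3.2}, and by \eqref{menoinf} they concentrate entirely on $S$: for any $\delta>0$, $\mu_k(\Omega\setminus\bigcup_i B_\delta(x^{(i)}))\to 0$ since $\hat u_k\to-\infty$ uniformly there. Hence, up to a subsequence, $\mu_k\rightharpoonup\sum_{i=1}^N\beta_i\delta_{x^{(i)}}$ weakly-$*$ as measures, where $\beta_i$ is exactly the quantity defined in \eqref{beta} (one must check this limit exists along a subsequence, which it does by monotonicity in $\delta$ and boundedness). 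Passing to the limit in the Green representation, using that $G_x(\cdot)$ is continuous away from $x$ and the estimate $\|G_y\|_{L^1}\leq C$ to handle the mass near $S$, yields $u_k\to\sum_i\beta_i G_{x^{(i)}}$ pointwise on $\overline\Omega\setminus S$. The $C^{2m-1,\alpha}_{\loc}$ upgrade then comes from elliptic estimates: on any compact $K\subset\overline\Omega\setminus S$, $(-\Delta)^m u_k = (2m-1)!e^{2m\hat u_k}\to 0$ uniformly on a neighborhood of $K$, the boundary data \eqref{dir} is fixed, so $u_k$ is bounded in $C^{2m-1,\alpha}(K)$ and converges to the harmonic-type limit $\sum_i\beta_i G_{x^{(i)}}$, which indeed satisfies $(-\Delta)^m(\sum\beta_i G_{x^{(i)}})=0$ on $\Omega\setminus S$ with the Dirichlet conditions, consistent with \eqref{greenf}.

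Finally, to show $\beta_i\geq\Lambda_1$: this is immediate from property $(A_4)$ of Lemma~\ref{hp}, since
\begin{equation*}
\beta_i = (2m-1)!\lim_{\delta\to0}\lim_{k\to\infty}\int_{B_\delta(x^{(i)})\cap\Omega}e^{2m\hat u_k}\,dy \geq (2m-1)!\lim_{R\to\infty}\lim_{k\to\infty}\int_{B_{R\mu_{k,i}}(x_{k,i})}e^{2m\hat u_k}\,dy = (2m-1)!|S^{2m}| = \Lambda_1,
\end{equation*}
where one uses that for $k$ large $B_{R\mu_{k,i}}(x_{k,i})\subset B_\delta(x^{(i)})$ (because $\mu_{k,i}\to0$ and $x_{k,i}\to x^{(i)}$), so the energy captured by the $i$-th bubble is a lower bound for the mass accumulating at $x^{(i)}$. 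I expect the main obstacle to be the careful justification that no mass escapes to the boundary or spreads out — i.e. the full strength of \eqref{menoinf} up to $\partial\Omega$ — which is where properties $(A_5)$ and Lemma~\ref{claim7} must be invoked together with boundary elliptic estimates, and the bookkeeping to ensure the weak-$*$ limit of $\mu_k$ is supported exactly on $S$ with no diffuse part.
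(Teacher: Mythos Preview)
Your proposal is correct and, for the convergence \eqref{limuk} and the lower bound $\beta_i\geq\Lambda_1$, follows the paper's argument closely (weak-$*$ convergence of the measures $(2m-1)!e^{2m\hat u_k}dx$ to a sum of Dirac masses at $S$, Green's representation, elliptic estimates up to the boundary, and property $(A_4)$).

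For \eqref{menoinf}, however, the detour through Theorem~\ref{main} is unnecessary and leaves you with the awkward task of showing $\Omega_0\subset S$, where $\Omega_0$ could a priori be $(2m-1)$-dimensional. The paper bypasses this entirely with a one-line argument: take $\delta>0$ small enough that $\Omega_\delta:=\Omega\setminus\bigcup_i\overline B_\delta(x^{(i)})$ is connected and meets $\partial\Omega$; Lemma~\ref{claim7} gives a uniform Lipschitz bound for $\hat u_k$ on $\overline\Omega_\delta$, and since $\hat u_k=-\alpha_k$ on $\partial\Omega$, connectedness propagates this to $|\hat u_k+\alpha_k|\leq C_\delta$ on all of $\overline\Omega_\delta$; then Lemma~\ref{claim10} ($\alpha_k\to\infty$) finishes. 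Your alternative route via the Green representation \eqref{eq3.3} together with $(A_5)$ (bounding $e^{2m\hat u_k}$ near $x$ and using $\int_\Omega e^{2m\hat u_k}\leq C$ elsewhere) would also work and yields the same bound $|\hat u_k+\alpha_k|\leq C$ away from $S$; either way, you should drop the appeal to Theorem~\ref{main}.
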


\begin{proof} \emph{Step 1.} We claim that $\hat u_k\to-\infty$ locally uniformly on $\overline\Omega\backslash S$. Indeed take $\delta>0$ such that $\Omega_\delta:=\Omega\setminus\cup_{i=1}^N\overline B_\delta(x_i)$ is connected and $\de \Omega_\delta\cap\de\Omega\neq\emptyset$. Lemma \ref{claim7} implies that $\hat u_k$ is Lipschitz on $\Omega_\delta$, and we also have $\hat u_k=-\alpha_k$ on $\de\Omega_\delta\cap\de\Omega$, hence
\begin{equation}\label{ukbdd}
|u_k|=|\hat u_k +\alpha_k|\leq C_\delta \textrm{ in }\overline\Omega_\delta.
\end{equation}
Since $\alpha_k\to+\infty$, we have $\hat u_k\to-\infty$ uniformly on $\overline \Omega_\delta$, hence the claim is proved.

\medskip

\noindent\emph{Step 2.}
By \eqref{dir} and Lemma \ref{claim7}, the $u_k$'s are bounded in $C^0_{\loc}(\overline\Omega\setminus S)$. Since
$$(-\Delta)^m u_k=(2m-1)!e^{-2m\alpha_k}e^{2mu_k},$$
where the right-hand side is bounded $C^0_{\loc}(\overline\Omega\backslash S)$,
by elliptic regularity we have that, up to a subsequence,
$$u_k\to \psi \quad \text{in } C^{2m-1,\alpha}_{\loc}(\overline\Omega\backslash S),$$ for some $\psi\in C^{2m-1,\alpha}_{\loc}(\overline\Omega\setminus S)$. Up to taking $\delta>0$ smaller, we may assume that $\overline{B_{\delta}(x^{(i)})}\cap \overline{B_{\delta}(x^{(j)})}=\emptyset$ for $i\neq j$. Since $\hat u_k\rightarrow -\infty$ uniformly on the compact $\overline\Omega_{\delta}$, we have by \eqref{eq3.3} 
\begin{eqnarray}
\lim_{k\rightarrow\infty}u_k(x)&=& (2m-1)!\lim_{k\rightarrow\infty}\int_\Omega G_x(y)e^{2m\hat u_k(y)}dy\nonumber\\
&=& (2m-1)!\lim_{k\rightarrow\infty}\sum_{i=1}^N\int_{B_{\delta}(x^{(i)})\cap\Omega}G_x(y)e^{2m\hat u_k(y)} dy.\label{green2}
\end{eqnarray}
Now we want an explicit expression for $\psi$. Fix $x\in \overline \Omega\backslash S$. We observe that $G(x,\cdot)$ is smooth away from $x$; in particular it is continuous on $B_{\delta}(x^{(i)})$ for all $i$ (up to decreasing $\delta$). By \eqref{eq3.2}, up to a subsequence we have
$$e^{2m\hat u_k}(y)dy\rightharpoonup \nu\quad \text{in }\overline\Omega$$
weakly in the sense of measures, for some positive Radon measure $\nu$.
On the other hand, since \eqref{menoinf} implies that the support of $\nu$ is contained in $S$, we get
$$\nu=\sum_{i=i}^N\beta_i\delta_{x^{(i)}},$$
for some constants $\beta_i\geq 0$.
Then \eqref{green2} implies
$$
\lim_{k\rightarrow\infty} u_k(x)=\sum_{i=1}^N\beta_i G_{x^{(i)}}(x)\quad\forall x\in\Omega\setminus S,
$$
where $\beta_i$ is as in \eqref{beta}.
Now we fix a point $x^{(i)}\in S$ and we set $\mu_{k,i}$ and $x_{k,i}$ as in Lemma \ref{claim4}. By ($A_4$)
$$\lim_{k\rightarrow \infty}\int_{B_{\delta}(x^{(i)})\cap\Omega}e^{2m\hat u_k(x)}dx\geq\lim_{R\to\infty}\lim_{k\rightarrow \infty}\int_{B_{R\mu_k}(x_{k,i})}e^{2m\hat u_k(x)}dx=|S^{2m}|.
$$
Taking the limit as $\delta\to 0$ we get $\beta_i\geq \Lambda_1$, as claimed.
\end{proof}

\begin{lemma}\label{claim11}
For any $x_0\in\de\Omega$ we have
\begin{equation}\label{r0}
 \lim_{r\rightarrow 0}\lim_{k\rightarrow+\infty}\int_{B_r(x_0)\cap\Omega}e^{2m\hat u_k}dx=0.
\end{equation}
In particular $S\cap\de\Omega=\emptyset$.
\end{lemma}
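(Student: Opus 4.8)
The plan is to argue by contradiction, mimicking the Pohozaev-identity argument already used in the proof of Lemma \ref{claim10}, but now \emph{without} the assumption that $\alpha_k$ stays bounded. So suppose there is a boundary point $x_0\in\de\Omega$ for which the limit in \eqref{r0} is some positive number $m_0>0$. By Lemma \ref{claim10} we now know $\alpha_k\to+\infty$, and by Lemma \ref{claim9} we have $\hat u_k\to-\infty$ locally uniformly on $\overline\Omega\setminus S$ and the weak-$*$ limit measure $\nu=\sum_i\beta_i\delta_{x^{(i)}}$ with $\beta_i\geq\Lambda_1$. The positivity of $m_0$ then forces at least one concentration point $x^{(i)}\in S$ to lie on $\de\Omega$ (indeed at $x_0$, or arbitrarily close), so it suffices to derive a contradiction from $S\cap\de\Omega\neq\emptyset$; this yields the "in particular" clause.

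First I would fix $\delta>0$ so small that $S\cap \overline{B_\delta(x_0)}=\{x_0\}$ (after relabeling), and set up exactly the same auxiliary quantities $\rho_{k,r}$, $y_{k,r}=x_0+\rho_{k,r}\nu(x_0)$ as in \eqref{rhokr}--\eqref{ykr}, with the same normalization $|\rho_{k,r}|\le 2r$ coming from \eqref{1/2}. Then I would apply Lemma \ref{poho} (the Pohozaev identity) to $u_k$ on $\Omega'=\Omega\cap B_r(x_0)$ with $Q=(2m-1)!e^{-2m\alpha_k}$ and $y=y_{k,r}$, obtaining the identity \eqref{s3m}. The boundary terms over $\de\Omega\cap B_r(x_0)$ are handled verbatim as in Lemma \ref{claim10}: the $f_k^{(1)}$-part integrates to zero because every monomial contains a derivative $\de^\gamma u_k$ of order $\le m-1$, which vanishes on $\de\Omega$ by \eqref{dir}; the remaining boundary contribution $-\tfrac12(x-y_{k,r})\cdot\nu_\Omega|\Delta^{m/2}u_k|^2+f_k^{(2)}$ simplifies via \eqref{nuk}/\eqref{fk22} to $\tfrac12(x-y_{k,r})\cdot\nu_\Omega|\Delta^{m/2}u_k|^2$, which vanishes by the very choice of $\rho_{k,r}$ (or because $\Delta^{m/2}u_k\equiv 0$ on $\de\Omega\cap B_r(x_0)$ when the denominator degenerates, using \eqref{1/2}). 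The curvature boundary term is bounded by $\int_{\de\Omega\cap B_r(x_0)}r\,e^{-2m\alpha_k}\le Cr^{2m}$ by \eqref{dir} and Lemma \ref{claim1}.

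The crucial new point — and the main obstacle — is controlling the interior sphere $\Omega\cap\de B_r(x_0)$. In Lemma \ref{claim10} these terms were $O(r^{2m-1})$ because $u_k$ converged in $C^{2m-1,\alpha}_{\loc}(\overline\Omega\setminus S)$ to a bounded limit; here $\hat u_k\to-\infty$ so $u_k=\hat u_k+\alpha_k$ has no such uniform bound near $x_0$, and I cannot simply quote Step 1 of Lemma \ref{claim10}. Instead I would estimate directly on $\de B_r(x_0)\cap\Omega$: using Green's representation \eqref{eq3.3}, the gradient bound \eqref{DAS}, and Lemma \ref{claim7}, the derivatives $|\nabla^\ell u_k(x)|=|\nabla^\ell\hat u_k(x)|$ for $1\le\ell\le 2m-1$ are bounded by $C|x-x_0|^{-\ell}$ on $\Omega\cap B_\delta(x_0)$ after decreasing $\delta$, since $x^{(i)}=x_0$ is the only nearby concentration point; and for $u_k$ itself one integrates from $\de\Omega$ (where $u_k=0$) along rays, as in Step 1 of Lemma \ref{claim10}, to get $|u_k(x)|\le C|\log|x-x_0||$ on $\Omega\cap B_\delta(x_0)$ (or even $|u_k(x)|\le C$ if one is more careful, but the logarithmic bound already suffices). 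Feeding these into the integrals over the $(2m-1)$-dimensional sphere $\Omega\cap\de B_r(x_0)$: the worst term scales like $r^{2m-1}\cdot r\cdot r^{-m}\cdot r^{-(m-1)}\cdot(\log r)^{\#}=r^{2m-1-2m+1}(\log r)^{\#}=(\log r)^{\#}$ — which is \emph{not} enough. So I expect the actual argument needs the sharper fact that, thanks to $(A_5)$ and Lemma \ref{claim7} applied with the single relevant center $x_0$, one has the stronger pointwise decay $e^{2m\hat u_k(x)}\le C|x-x_0|^{-2m}$ and hence (integrating \eqref{eq3.3} and \eqref{DAS} more carefully, splitting $\Omega$ into $B_{|x-x_0|/2}(x_0)$, an annulus, and the far region) a bound of the form $|\nabla^\ell u_k(x)|\le C|x-x_0|^{-\ell}$ \emph{with $C$ independent of $k$}, together with $|u_k(x)|\le C$ independent of $k$ on $\Omega\cap B_\delta(x_0)$ — at which point each interior-sphere term in \eqref{s3m} is $O(r^{2m-1})$ just as before. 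Then, letting $k\to\infty$ in \eqref{s3m} gives, via $(A_4)$ and the hypothesis $m_0>0$,
$$\Lambda_1\le \lim_{k\to\infty}(2m-1)!\int_{\Omega'}e^{2m\hat u_k}dx\le Cr^{2m-1}+Cr^{2m}+0,$$
and letting $r\to 0$ produces $\Lambda_1\le 0$, the desired contradiction. Since $\Lambda_1>0$, no boundary point can carry positive mass, which gives \eqref{r0}; and since every $x^{(i)}\in S$ carries mass $\beta_i\ge\Lambda_1>0$ by Lemma \ref{claim9}, no $x^{(i)}$ lies on $\de\Omega$, i.e. $S\cap\de\Omega=\emptyset$. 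For $m$ even one repeats the $f_k^{(2)}$ computation with $U_k=\Delta^{(m-2)/2}u_k$ exactly as in Lemma \ref{claim10}; the only genuinely new work, and the step I expect to be delicate, is the $k$-uniform interior estimate near $x_0$ described above.
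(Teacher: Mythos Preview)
Your overall strategy --- contradiction, Pohozaev on $\Omega'=\Omega\cap B_r(x_0)$, and the same handling of the flat boundary piece $\de\Omega\cap B_r(x_0)$ via the choice of $y_{k,r}$ --- matches the paper exactly. The gap is precisely where you flag it: the control of the spherical piece $\Omega\cap\de B_r(x_0)$.

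Your proposed fix does not work, and your own scaling computation shows why. Lemma \ref{claim7} only gives $|\nabla^\ell u_k(x)|\le C|x-x_0|^{-\ell}$, and no amount of ``more careful splitting'' of the integral in \eqref{int0} will improve this to a bound independent of $|x-x_0|$: the estimate is sharp at that level of generality. With $|\nabla^\ell u_k|\sim r^{-\ell}$ on $\de B_r$, the boundary integrands in the Pohozaev identity are of size $r^{-(2m-1)}$ (or $r\cdot r^{-2m}$ for the quadratic term), and integrating over a $(2m-1)$-sphere leaves an $O(1)$ contribution that does not vanish as $r\to 0$. So the contradiction never closes along this route.

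The missing idea is to use the convergence \eqref{limuk} from Lemma \ref{claim9}, not just its weak-$*$ consequence. You already know $u_k\to\psi:=\sum_i\beta_i G_{x^{(i)}}$ in $C^{2m-1,\alpha}_{\loc}(\overline\Omega\setminus S)$. The crucial observation --- which you do not make --- is that for $x_0\in\de\Omega$ one has $G_{x_0}\equiv 0$ (the pole sits on the boundary, so $\delta_{x_0}=0$ as a distribution on the open set $\Omega$, and the homogeneous Dirichlet problem forces $G_{x_0}$ to vanish). Since $x_0$ is the only point of $S$ in $B_\delta(x_0)$, the limit $\psi$ near $x_0$ is a finite sum of Green functions with poles \emph{away} from $x_0$, hence smooth with uniformly bounded derivatives there. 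Passing to the limit, $|\nabla^\ell u_k|\le C$ on $\Omega\cap\de B_r(x_0)$ for all $0\le\ell\le 2m-1$, with $C$ independent of $r$ and $k$ large. Now every interior-sphere term is genuinely $O(r^{2m-1})$ (or $O(r^{2m})$ for the ones carrying the factor $(x-y_{k,r})$), and the contradiction $\Lambda_1\le Cr^{2m-1}\to 0$ goes through. This is exactly how the paper argues.
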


\begin{proof}
Fix $x_0\in\de\Omega$. If $x_0\not\in S$, then \eqref{r0} follows at once from Lemma \ref{claim9}. Then we can assume $x_0=x^{(j)}\in \de\Omega\cap S$ for some $1\leq j\leq N$, and proceed by contradiction. Take $\delta>0$ such that $S\cap B_{\delta}(x_0)=\{x_0\}$. Let $\nu:\de\Omega\rightarrow S^{2m-1}$ be the outward pointing normal to $\de\Omega$. Set $\rho_{k,r}$ and $y_{k,r}$ as in \eqref{rhokr} and \eqref{ykr}.
Take $r>0$ so small that
$$\frac{1}{2}\leq\nu(x_0)\cdot\nu(x)\leq 1 \quad \text{for } x\in\de\Omega\cap\overline B_r(x_0),$$
so that $|\rho_{k,r}|\leq 2r$.
Applying Lemma \ref{poho} to $u_k$ on the domain $\Omega':=\Omega\cap B_r(x_0)$, with
$$Q=(2m-1)!e^{-2m\alpha_k},\quad y=y_{k,r},$$
we obtain
\begin{eqnarray*}\label{int12}
(2m-1)!\int_{\Omega'}e^{2m\hat u_k}dx&=&\frac{(2m-1)!}{2m}\int_{\partial \Omega'}(x-y_{k,r})\cdot\nu_{\Omega'} e^{2m \hat u_k}d\sigma\\
&&+\int_{\partial \Omega'}\bigg[-\frac{1}{2}(x-y_{k,r})\cdot\nu_{\Omega'} |\Delta^\frac{m}{2} u_k|^2+f_k^{(2)}(x)\bigg]d\sigma  \\
&&+\int_{\partial \Omega'}f_k^{(1)}(x)d\sigma,\nonumber
\end{eqnarray*}
where
$f_k(x)=f_k^{(1)}+f_k^{(2)}$, with the same notations as in \eqref{eqfk}, \eqref{f22k}.
Since each monomial of $f_k^{(1)}$ contains a factor of the form $\partial^\gamma u_k$ with $|\gamma|\leq m-1$,
we get
$$\int_{\partial\Omega\cap B_r(x_0)}f_k^{(1)}d\sigma=0.$$
Again we have that \eqref{fk22} holds and the corresponding integral vanishes, thanks to our choice of $\rho_{k,r}$ and $y_{k,r}$. This takes care of the integral on $\de\Omega\cap B_r(x_0)$.

Since $G_{x_0}\equiv 0$, and the derivatives of $G_{x^{(i)}}$ are bounded in $\overline{B_r(x_0)}$ for $x^{(i)}\neq x_0$, \eqref{limuk} implies
$$
\lim_{k\rightarrow+\infty}\int_{\Omega\cap\de B_r(x_0)}f_k^{(1)} d\sigma\leq Cr^{2m-1},
$$
and
$$\lim_{k\to\infty}\int_{\Omega\cap \partial B_r(x_0)}\bigg[-\frac{1}{2}(x-y_{k,r})\cdot\nu|\Delta^\frac{m}{2} u_k|^2+f_k^{(2)}\bigg]d\sigma\leq Cr^{2m}.$$
As for the first term on the right-hand side of \eqref{int12}, \eqref{dir} and Lemma \ref{dir} imply
$$
\int_{\de\Omega'}(x-y_{k,r})\cdot\nu_{\Omega'}e^{-2m\alpha_k}e^{2mu_k}d\sigma\leq Cr^{2m}.
$$
Summing up all the contributions and letting $r\rightarrow 0$ we get \eqref{r0}.
\end{proof}

\begin{lemma}\label{claim13}
In \eqref{limuk} and \eqref{beta} we have $\beta_i=\Lambda_1$ for all $1\leq i\leq N$.
\end{lemma}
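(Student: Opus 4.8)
The plan is to combine two opposite inequalities. From Lemma \ref{claim9} we already know $\beta_i\geq\Lambda_1$ for every $i$, so the whole content of the lemma is the reverse bound $\beta_i\leq\Lambda_1$. The natural tool is again the Pohozaev identity of Lemma \ref{poho}, but now applied on small \emph{interior} balls $B_r(x^{(i)})$ centered at a concentration point $x^{(i)}\in\Omega$ (by Lemma \ref{claim11} we know $S\subset\Omega$, so for $r$ small $\overline{B_r(x^{(i)})}\subset\Omega$ and contains no other point of $S$). Applying Lemma \ref{poho} to $u_k$ on $\Omega'=B_r(x^{(i)})$ with $Q=(2m-1)!e^{-2m\alpha_k}$ and with the center $y$ taken to be $x^{(i)}$ itself (no boundary-adjustment needed since the boundary of $\Omega'$ is a round sphere, not part of $\de\Omega$), one gets
\begin{equation}\label{pohoint}
(2m-1)!\int_{B_r(x^{(i)})}e^{2m\hat u_k}dx=\frac{(2m-1)!}{2m}\int_{\de B_r(x^{(i)})}(x-x^{(i)})\cdot\nu\, e^{2m\hat u_k}d\sigma+I_k(r),
\end{equation}
where $I_k(r)$ collects the terms $-\frac12(x-x^{(i)})\cdot\nu|\Delta^{m/2}u_k|^2$ together with $f_k^{(1)}$ and $f_k^{(2)}$, all integrated over the sphere $\de B_r(x^{(i)})$.

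The first step is to control the right-hand side of \eqref{pohoint} as $k\to\infty$ and then $r\to 0$. Since $\hat u_k\to-\infty$ locally uniformly on $\overline\Omega\setminus S$ (Lemma \ref{claim9}), the term $\int_{\de B_r(x^{(i)})}(x-x^{(i)})\cdot\nu\,e^{2m\hat u_k}d\sigma\to 0$ as $k\to\infty$ for each fixed small $r$. For $I_k(r)$ I would use \eqref{limuk}: on $\de B_r(x^{(i)})$ the functions $u_k$ converge in $C^{2m-1,\alpha}$ to $\sum_j\beta_j G_{x^{(j)}}$, whose dominant singular contribution near $x^{(i)}$ is $\beta_i G_{x^{(i)}}$. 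The key point is the precise asymptotics of the Green function: $G_{x^{(i)}}(x)=-\frac{1}{\Lambda_1}\log|x-x^{(i)}|+O(1)$ (the normalization being fixed by \eqref{greenf} and the value of $\Lambda_1=(2m-1)!|S^{2m}|$), so on $\de B_r(x^{(i)})$ one has $\nabla^\ell u_k\sim -\frac{\beta_i}{\Lambda_1}\nabla^\ell\log|x-x^{(i)}|$ up to lower-order terms. Plugging this radial profile into the bilinear boundary expression $I_k(r)$ and doing the spherical integral — exactly the computation that identifies the "energy of a radial polyharmonic singularity" — produces, after $k\to\infty$ and $r\to 0$, a quantity of the form $c_m\,\beta_i^2$ for an explicit constant $c_m$; combining with the left-hand side, which tends to $\beta_i$ by definition \eqref{beta}, yields the algebraic relation $\beta_i=c_m\beta_i^2$, i.e. $\beta_i=1/c_m$, and one checks $1/c_m=\Lambda_1$.

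A cleaner route, which I would actually prefer to carry out, avoids computing $c_m$ by hand: rescale. Set $v_{k,i}(x)=\hat u_k(x^{(i)}+\mu_{k,i}x)+\log\mu_{k,i}$; by $(A_3)$ this converges to $\eta_0$ in $C^{2m-1,\alpha}_{\loc}$. Apply the Pohozaev identity on $B_R(0)$ to $v_{k,i}$ (equivalently, change variables in \eqref{pohoint} with $r=R\mu_{k,i}$), let $k\to\infty$ to get the identity for $\eta_0$ on $B_R(0)$, and then let $R\to\infty$. Since $\eta_0$ gives the spherical metric with total $Q$-curvature exactly $\Lambda_1$, the left side converges to $(2m-1)!\int_{\R{2m}}e^{2m\eta_0}dx=\Lambda_1$; and the decay of $\nabla^\ell\eta_0$ (like $|x|^{-1}$, so the boundary terms on $\de B_R$ behave like $R^{2m}\cdot R^{-2m}=O(1)$ but with the correct constant coming from the exact profile) shows that the limit of the boundary integrals as $R\to\infty$ is precisely $\Lambda_1$ times an elementary factor, forcing the "local mass" captured at scale $\mu_{k,i}$ to be exactly $\Lambda_1$. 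The delicate part is then the \emph{matching} between the two scales: one must show that no extra energy hides in the annular region $B_r(x^{(i)})\setminus B_{R\mu_{k,i}}(x^{(i)})$ as $r\to 0$, $R\to\infty$. This is where Lemma \ref{claim7} (the bound $|x-x^{(i)}|^\ell|\nabla^\ell\hat u_k(x)|\leq C$) together with property $(A_5)$ does the work: in that annulus $e^{2m\hat u_k(x)}\leq C|x-x^{(i)}|^{-2m}$ and the gradients obey the matching decay, so the Pohozaev boundary terms over any intermediate sphere are controlled uniformly, and a "no-neck" argument shows the annular contribution to both $\int e^{2m\hat u_k}$ and to the boundary bilinear form tends to $0$.

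I expect the main obstacle to be precisely this neck analysis — propagating the sharp pointwise bounds of Lemma \ref{claim7} and $(A_5)$ through the polyharmonic Pohozaev boundary terms to show that the energy in the intermediate annulus is negligible, so that the mass $\beta_i$ equals the "bubble mass" $\Lambda_1$ and nothing more. The reduction to $\eta_0$ and the computation of the bubble's Pohozaev balance is, by contrast, essentially a consequence of \eqref{liou}, \eqref{vol} and the explicit form \eqref{eta0} already established in Lemma \ref{claim4}.
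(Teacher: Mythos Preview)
Your first sketched route --- apply Pohozaev on a fixed interior ball $B_r(x^{(i)})$, let $k\to\infty$ using the $C^{2m-1,\alpha}$ convergence of $u_k$ to $\sum_j\beta_jG_{x^{(j)}}$ on $\partial B_r$, then let $r\to 0$ to extract the leading singular part $\beta_i g$ with $g(x)=\frac{1}{\gamma_{2m}}\log\frac{1}{|x|}$, and obtain the algebraic relation $\beta_i=c_m\beta_i^2$ --- is exactly what the paper does. The paper carries out the explicit computation: it writes $G_{x^{(i)}}=g+R$ with $R$ smooth, shows the $R$-contributions to the boundary bilinear form are $O(\delta)$, and then computes $II_\delta(g)+III_\delta(g)=1/\Lambda_1$ by direct evaluation of $\Delta^k g$ and $\partial_r\Delta^k g$ on a sphere. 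This yields $\beta_i=\beta_i^2/\Lambda_1$, hence $\beta_i=\Lambda_1$.

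Your ``preferred'' rescaled route with a neck analysis is unnecessary and, as you describe it, more delicate than you suggest. The bound $(A_5)$ gives only $e^{2m\hat u_k}\leq C|x-x^{(i)}|^{-2m}$ on the annulus $B_r\setminus B_{R\mu_{k,i}}$, and $\int_{B_r\setminus B_{R\mu_{k,i}}}|x|^{-2m}dx\sim\log\big(r/(R\mu_{k,i})\big)$ diverges, so the annular energy is \emph{not} directly shown to vanish by $(A_5)$ alone; extracting that would take extra work. The paper's point is precisely that one never has to control the annulus: since Pohozaev converts the volume integral on $B_r$ into a boundary integral on $\partial B_r$, and since $u_k\to\beta_iG_{x^{(i)}}+\text{smooth}$ in $C^{2m-1,\alpha}(\partial B_r)$ by Lemma~\ref{claim9}, everything happens at the single outer scale $r$. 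The rescaling to $\eta_0$ and a matching/no-neck argument would recover the same constant, but at the cost of reproving what the explicit Green-function computation gives in a few lines.
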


\begin{proof}
Since $S\cap\de\Omega=\emptyset$, there exists $\delta>0$ such that $B_{\delta}(x^{(i)})\subset\Omega$, and $S\cap B_{\delta}(x^{(i)})=\{x^{(i)}\}$ for all $1\leq i\leq N$. Fix $i$ with $1\leq i\leq N$ and suppose, up to a translation, that $x^{(i)}=0$. Recall that
$$
\beta_i=(2m-1)!\lim_{\delta\rightarrow 0}\lim_{k\rightarrow \infty}\int_{B_{\delta}(0)}e^{2m\hat u_k}dx.
$$ 
By the Pohozaev identity of Lemma \ref{poho}, applied to $u_k$ on the domain $B_\delta:=B_{\delta}(0)$ with $y=0$ and $Q=(2m-1)!e^{-2m\alpha_k}$, we get
\begin{equation}\label{eqpoh}
(2m-1)!\int_{B_{\delta}}e^{2m\hat u_k}dx=I_\delta(u_k)+II_\delta(u_k)+III_\delta(u_k),
\end{equation}
where
\begin{eqnarray*}
 I_\delta(u_k)&=&\frac{\delta(2m-1)!}{2m}\int_{\de B_{\delta}} e^{2m\hat u_k}d\sigma\\
II_\delta(u_k)&=&-\frac{\delta}{2}\int_{\de B_{\delta}}|\Delta^\frac{m}{2} u_k|^2 d\sigma\\
III_\delta(u_k)&=&\sum_{j=0}^{m-1}(-1)^{m+j+1}\int_{\de B_\delta}\nu\cdot\left(\Delta^\frac{j}{2}\left(x\cdot \nabla u_k\right)\Delta^\frac{2m-1-j}{2}u_k \right)d\sigma
\end{eqnarray*}
From Lemma \ref{claim9} we infer
\begin{eqnarray*}
\lim_{k\rightarrow\infty}II_\delta(u_k)&=&II_\delta(\beta_i G_0)=\beta_i^2II_\delta(G_0)\\
\lim_{k\rightarrow\infty}III_\delta(u_k)&=&III_\delta(\beta_i G_0)=\beta_i^2III_\delta(G_0).
\end{eqnarray*}
Since the functions $e^{2m\hat u_k}\to 0$ in $C^0(\de B_\delta)$, we have 
$$
\lim_{k\rightarrow\infty}I_\delta(u_k)=0.
$$
The Green function $G_0$ can be decomposed in the sum of a fundamental solution for the operator $(-\Delta)^m$ on $\R{2m}$ and a so-called regular part $R$, which is smooth:
Let us write
$$G_0=g+R\quad \text{in }\overline\Omega$$
where
$$g(x):=\frac{1}{\gamma_{2m}}\log\frac{1}{|x|},\quad \gamma_{2m}:=\frac{\Lambda_1}{2}$$
satisfies $(-\Delta)^m g=\delta_0$ (see e.g. Proposition 22 in \cite{mar1}),
and $R:=G_0-g\in C^\infty(\overline\Omega)$.
Since
\begin{equation}\label{R}
|\nabla^j R|\leq C,\quad |\nabla^j g|\leq \frac{C}{\delta^j}\quad \text{on }\de B_\delta,
\end{equation}
we get
$$ II_\delta(R+g)-II_\delta(g)\leq C \delta \int_{\de B_{\delta}}C\left(|\Delta^\frac{m}{2} g| +C\right)d\sigma\leq C \delta^m.$$
For the terms in $III_\delta(R+g)$, \eqref{R} implies
\begin{eqnarray*}
III_\delta^{(j)}(g+R)&:=& \int_{\de B_\delta}\nu\cdot\left(\Delta^\frac{j}{2}\left(x\cdot \nabla (R+g)\right)\Delta^\frac{2m-1-j}{2}(R+g)\right)d\sigma\\
&=&\int_{\de B_\delta}\nu\cdot\left(\Delta^\frac{j}{2}\left(x\cdot \nabla g\right)\Delta^\frac{2m-1-j}{2}g\right)d\sigma\\
&&+\int_{\de B_\delta}\nu\cdot\left(\Delta^\frac{j}{2}\left(x\cdot \nabla R\right)\Delta^\frac{2m-1-j}{2}g\right)d\sigma\\
&&+\int_{\de B_\delta}\nu\cdot\left(\Delta^\frac{j}{2}\left(x\cdot \nabla g\right)\Delta^\frac{2m-1-j}{2}R\right)d\sigma\\
&&+\int_{\de B_\delta}\nu\cdot\left(\Delta^\frac{j}{2}\left(x\cdot \nabla R\right)\Delta^\frac{2m-1-j}{2}R\right)d\sigma\\
&=&III_\delta^{(j)}(g)+O(\delta)\quad \text{as }\delta\to 0,
\end{eqnarray*}
where $|O(\delta)|\leq C\delta$ as $\delta\to 0$.
Summing up all what we proved until now, we obtain

$$\beta_i=\beta_i^2\lim_{\delta\rightarrow 0}\lim_{k\to\infty}\big[I_\delta(u_k)+II_\delta(u_k)+III_\delta(u_k)\big]=\beta_i^2 \lim_{\delta\to 0}\big[II_\delta(g)+III_\delta(g)\big].$$
On the other hand, since $II_\delta(g)$ and $III_\delta(g)$ do not depend on $\delta$, it is enough to compute

\begin{equation}\label{eqg}
\beta_i=II_\delta(g)+III_\delta(g)
\end{equation}
for an arbitrary $\delta>0$.
Using the formula
$$\gamma_{2m}\Delta^k g=(-1)^k(2k-2)!!\frac{(2m-2)!!}{(2m-2k-2)!!}r^{-2k},$$
we find
$$
II_\delta(g)=-\frac{\delta}{2}\int_{\de B_\delta}\left[\frac{(2m-2)!!}{\gamma_{2m}}r^{-m}\right]^2d\sigma=-|S^{2m-1}|\frac{[(2m-2)!!]^2}{2\gamma_{2m}^2}.
$$
Observing that
\begin{eqnarray*}
\Delta^k(x\cdot \nabla g)&=&2k\Delta^kg+r\de_r\Delta^k g=0,\\
\de_r(x\cdot\nabla g)&=&-r^{-1} - x\cdot\nabla(r^{-1})=0,\\
x\cdot\nabla g&=&r\de_rg=-\frac{1}{\gamma_{2m}},\\
\gamma_{2m}\de_r\Delta^k g&=&(-1)^{k+1}(2k)!!\frac{(2m-2)!!}{(2m-2k-2)!!}r^{-2k-1} \\
\end{eqnarray*}
we see that $III_\delta^{(j)}(g)=0$ for $1\leq j\leq m-1$, and 
\begin{eqnarray*}
III_\delta(g)&=&III_\delta^{(0)}(g)=(-1)^{m+1}\int_{\de B_\delta}(x\cdot \nabla g)\de_r\Delta^{m-1}gd\sigma\\
&=&|S^{2m-1}|\frac{[(2m-2)!!]^2}{\gamma_{2m}^2}.
\end{eqnarray*}
From \eqref{eqg} we get
$$
\frac{1}{\beta_i}=|S^{2m-1}|\frac{[(2m-2)!!]^2}{2\gamma_{2m}^2}=\frac{1}{(2m-1)!|S^{2m}|},
$$
whence $\beta_i=\Lambda_1$.
\end{proof}

\medskip

\noindent\emph{Proof of Theorem \ref{trm2}.} By Corollary \ref{coru}, it suffices to prove that, under the assumption \eqref{ukinfty}, case (ii) of the theorem occurs. This follows at once putting together Lemmas \ref{hp}, \ref{claim9}, \ref{claim11} and \ref{claim13}.
\phantom{ }\hfill$\square$

\appendix
\section*{Appendix}

\subsection*{A useful theorem}

Several times we used the following theorem from \cite{mar2} (compare also \cite{BM} and \cite{ARS}).

\begin{trm}\label{main} Let $\Omega$ be a domain in $\R{2m}$, $m>1$, and let $(u_k)_{k\in \mathbb{N}}$ be a sequence of functions satisfying
\begin{equation}\label{eqliou}
(-\Delta)^m u_k=(2m-1)! e^{2m u_k}.
\end{equation}
Assume that
\begin{equation}\label{vk}
\int_{\Omega} e^{2m u_k}dx\leq C,
\end{equation}
for all $k$ and define the finite (possibly empty) set 
$$S_1:=\bigg\{x\in\Omega : \lim_{r\to 0^+}\lim_{k\to\infty}\int_{B_r(x)}(2m-1)!e^{2mu_k}dy\geq \frac{\Lambda_1}{2} \bigg\}.$$
Then one of the following is true.
\begin{enumerate}
\item[(i)] A subsequence converges in $C^{2m-1,\alpha}_{\loc}(\Omega)$ and $S_1=\emptyset$.

\item[(ii)] There exist a subsequence, still denoted by $(u_k)$, a closed nowhere dense set $S_0$ of Hausdorff dimension at most $2m-1$ such that, letting $\Omega_0=S_0\cup S_1,$
we have $u_k\rightarrow -\infty$ locally uniformly in $\Omega\bs \Omega_0$ as $k\rightarrow \infty.$ Moreover there is a sequence of numbers $\beta_k\rightarrow \infty$ such that
$$\frac{u_k}{\beta_k}\rightarrow \varphi \textrm{ in } C^{2m-1,\alpha}_{\loc}(\Omega\backslash \Omega_0),$$
where $\varphi\in C^\infty(\Omega\bs S_1)$, $S_0=\{x\in\Omega:\varphi(x)=0\}$, and
$$(-\Delta)^m\varphi\equiv 0,\quad \varphi\leq 0,\quad \varphi \not\equiv 0\quad \textrm{in }\Omega\backslash S_1.$$
\end{enumerate}
\end{trm}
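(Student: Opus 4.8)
\medskip
\noindent\textit{Strategy of proof.} Since this is Theorem~1 of \cite{mar2}, I only outline the plan. First I would pass to a subsequence for which the nonnegative finite measures $\mu_k:=(2m-1)!e^{2mu_k}\,dx$ converge weakly-$*$ on $\Omega$ to a finite Radon measure $\mu$; with this choice $S_1=\{x\in\Omega:\mu(\{x\})\geq\Lambda_1/2\}$ equals the set in the statement and is automatically finite, of cardinality at most $2(2m-1)!\,C/\Lambda_1$. The argument then splits according to whether $(u_k)$ is bounded in $L^1_{\loc}(\Omega\setminus S_1)$ or not, the two cases producing alternatives (i) and (ii).

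The first tool is a local $\varepsilon$-regularity estimate, whose analytic core is the higher order Brezis--Merle / Adams--Moser--Trudinger inequality. Fix $B_{2r}(x_0)\Subset\Omega$ with $\mu(\overline{B_{2r}(x_0)})<\Lambda_1/2$ --- possible exactly when $x_0\notin S_1$ --- so that $\|f_k\|_{L^1(B_{2r}(x_0))}<\Lambda_1/2-\delta$ for large $k$, writing $f_k:=(2m-1)!e^{2mu_k}$. I would decompose $u_k=v_k+h_k$ on $B_{2r}(x_0)$, with $(-\Delta)^m v_k=f_k$ under Dirichlet conditions on $\partial B_{2r}(x_0)$ and $(-\Delta)^m h_k\equiv0$, and use the bounds $|G(x,y)|\leq\gamma_{2m}^{-1}\log\frac{1}{|x-y|}+C$ and $|\nabla^\ell G(x,y)|\leq C|x-y|^{-\ell}$ (with $\gamma_{2m}=\Lambda_1/2$) to get $\|e^{2mp|v_k|}\|_{L^1(B_{2r}(x_0))}\leq C$ for some $p=p(\delta)>1$, and $\|\nabla^\ell v_k\|_{L^q(B_{2r}(x_0))}\leq C$ for $q<2m/\ell$ exactly as in the proof of Lemma~\ref{claim2}; the threshold $\Lambda_1/2$ in the definition of $S_1$ is dictated precisely by this exponential estimate. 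If in addition $\|u_k\|_{L^1(B_{2r}(x_0))}\leq C$, then $h_k=u_k-v_k$ is bounded in $L^1$, hence --- being polyharmonic --- bounded in $C^\infty$ on compact subsets, so $f_k=(2m-1)!e^{2mv_k}e^{2mh_k}$ is bounded in $L^p_{\loc}$, and a Brezis--Kato-type bootstrap ($v_k$ bounded in $W^{2m,p}_{\loc}\hookrightarrow C^0_{\loc}$ since $p>1$, so $u_k$ bounded in $C^0_{\loc}$, so $f_k$ bounded in $C^0_{\loc}$) yields that $u_k$ is bounded in $C^{2m-1,\alpha}_{\loc}(B_{2r}(x_0))$.

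Using that a polyharmonic function bounded in $L^1$ on a sub-ball is bounded in $C^\infty$ on compact subsets of the ball, one checks that the set of points of $\Omega\setminus S_1$ near which $(u_k)$ is bounded in $L^1$ is open and closed in $\Omega\setminus S_1$; as the latter is connected ($2m\geq4$), it is all of $\Omega\setminus S_1$ or empty. In the first case the $\varepsilon$-regularity estimate gives, up to a subsequence, $u_k\to u_0$ in $C^{2m-1,\alpha}_{\loc}(\Omega\setminus S_1)$, and it remains to show $S_1=\emptyset$ --- then the convergence holds on all of $\Omega$, which is alternative (i). In the second case I would fix $\omega_0\Subset\Omega\setminus S_1$, set $\beta_k:=\|u_k\|_{L^1(\omega_0)}\to+\infty$ and $\varphi_k:=u_k/\beta_k$; writing $u_k=v_k+h_k$ locally, $v_k/\beta_k\to0$ in $W^{1,q}_{\loc}$ while $h_k/\beta_k$ stays bounded in $L^1_{\loc}$ and hence, being polyharmonic, compact in $C^{2m-1,\alpha}_{\loc}(\Omega\setminus S_1)$, so that $\varphi_k\to\varphi$ in $C^{2m-1,\alpha}_{\loc}(\Omega\setminus S_1)$ along a subsequence with $(-\Delta)^m\varphi\equiv0$ (since $(-\Delta)^m\varphi_k=f_k/\beta_k\to0$ in $L^1_{\loc}$), $\varphi\not\equiv0$ (by the normalization on $\omega_0$), and $\varphi\leq0$ --- for on $\{\varphi>0\}$ one would have $u_k\to+\infty$ locally uniformly, forcing $\int_\Omega e^{2mu_k}\to\infty$ against \eqref{vk}. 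Then $S_0:=\{\varphi=0\}$ is closed, and, a nonzero polyharmonic function being real-analytic, nowhere dense of Hausdorff dimension at most $2m-1$; on $\Omega\setminus\Omega_0$ with $\Omega_0:=S_0\cup S_1$ one has $\varphi<0$, whence $u_k=\beta_k\varphi_k\to-\infty$ locally uniformly, which is alternative (ii).

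The hard part will be ruling out residual atoms in the bounded case, i.e.\ showing that if $(u_k)$ is bounded in $L^1_{\loc}(\Omega\setminus S_1)$ then $S_1=\emptyset$. For $m>1$ there is no maximum principle and $-\Delta u_k$ need not be signed, so this cannot be deduced from Harnack inequalities as in the second order situation; instead I would argue locally around a hypothetical $x_0\in S_1$ via a Pohozaev-type identity on a small ball $B_r(x_0)$ with $\partial B_r(x_0)\subset\Omega\setminus S_1$, where all derivatives of $u_k$ are already controlled, in the spirit of Lemma~\ref{claim13} (or of \cite{RS}), to force the concentrated mass $\mu(\{x_0\})$ to vanish. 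Carrying this out --- together with the bookkeeping needed to run the $\varepsilon$-regularity and the rescaled limit uniformly near, but not at, $S_1$ --- is the technical heart of the proof.
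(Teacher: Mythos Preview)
The paper does not prove Theorem~\ref{main}; it is quoted in the appendix as a result from \cite{mar2} (with antecedents in \cite{BM} and \cite{ARS}), and you correctly flag this at the outset. There is therefore no in-paper argument to compare your sketch against.

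Your outline is essentially the standard route of \cite{ARS} and \cite{mar2}: weak-$*$ limit of the measures $(2m-1)!e^{2mu_k}dx$, finiteness of $S_1$, local $\varepsilon$-regularity via the Adams/Brezis--Merle exponential bound with threshold $\gamma_{2m}=\Lambda_1/2$, the decomposition $u_k=v_k+h_k$ into a Green potential plus a polyharmonic remainder, and the dichotomy on local $L^1$-boundedness. Your treatment of alternative~(ii) --- normalizing by $\beta_k=\|u_k\|_{L^1(\omega_0)}$, using interior compactness of polyharmonic functions, and identifying $S_0$ as the zero set of a nonzero real-analytic function --- is the expected one.

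The one place to correct is what you call the ``hard part''. You propose a Pohozaev argument on $B_r(x_0)$ to rule out $S_1\neq\emptyset$ in the bounded case, invoking Lemma~\ref{claim13}. That lemma, however, \emph{computes} the exact mass at an interior blow-up point; it does not exclude blow-up, and its use of the Dirichlet data is not available here. The actual exclusion in \cite{ARS}/\cite{mar2} is much simpler. Work on a small ball $B_{2r}(x_0)$, where Boggio's formula gives a nonnegative Green function $G$, and keep your decomposition $u_k=v_k+h_k$. If $u_k\to u_0$ in $C^{2m-1,\alpha}_{\loc}$ away from $x_0$ while mass $\beta\geq\Lambda_1/2=\gamma_{2m}$ concentrates at $x_0$, then for $x\neq x_0$
\[
u_0(x)\;\geq\;\beta\,G(x,x_0)+h_0(x)\;\geq\;\frac{\beta}{\gamma_{2m}}\log\frac{1}{|x-x_0|}-C,
\]
hence $e^{2mu_0(x)}\geq c\,|x-x_0|^{-2m\beta/\gamma_{2m}}\geq c\,|x-x_0|^{-2m}$, which is not integrable near $x_0$ in $\R{2m}$. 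This contradicts $\int e^{2mu_0}\leq\liminf_k\int e^{2mu_k}\leq C$ by Fatou. So no Pohozaev identity is needed, and this is precisely why the constant $\Lambda_1/2$ (and not some other threshold) appears in the definition of $S_1$.
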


\subsection*{Pohozaev's identity}

We now discuss a generalization of the celebrated Pohozaev identity to higher dimension, Lemma \ref{poho} below. A similar identity can be also found in \cite{xu}. 
We use the following notation:

\begin{equation}\label{nablam}
\Delta^\frac{m}{2} u:= \nabla\Delta^n u   \in\R{2m}   \text{ if }m=2n+1\text{ is odd,}
\end{equation}
and we define $\Delta^j u\cdot\Delta^\ell u$ using the inner product of $\R{2m}$, or the multiplication by a scalar, or the product of $\R{}$, according to whether $j$ and $\ell$ are integer or half-integer.

Preliminary to the proof of Pohozaev's identity, we need the following lemma.

\begin{lemma}\label{alg}
Let $u\in C^{m+1}(\Omega)$, where $\Omega\subset \R{2m}$ is open, and let $y\in \R{2m}$ be fixed. We have
$$
\frac{1}{2}\diver((x-y)|\Delta^{\frac{m}{2}} u|^2)= \Delta^{\frac{m}{2}}((x-y)\cdot \nabla u) \cdot \Delta^{\frac{m}{2}} u
$$
\end{lemma}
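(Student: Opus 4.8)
\textbf{Proof proposal for Lemma~\ref{alg}.}

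The plan is to expand both sides directly and match terms, treating the even and odd cases of $m$ in a unified way by means of the convention \eqref{nablam}. Write $v:=(x-y)\cdot\nabla u$, so the right-hand side is $\Delta^{m/2}v\cdot\Delta^{m/2}u$. The key computational input is the commutator identity between the dilation field $X:=(x-y)\cdot\nabla$ and the Laplacian: one checks that $\Delta(Xw)=X(\Delta w)+2\Delta w$ for any smooth $w$, and more generally, iterating, $\Delta^k(Xw)=X(\Delta^k w)+2k\,\Delta^k w$. Applying this with $w=u$ and $k$ equal to $\lfloor m/2\rfloor$ (and taking one further gradient when $m$ is odd, using $\nabla(Xw)=X(\nabla w)+\nabla w$ componentwise), I obtain
$$\Delta^{\frac{m}{2}}v=\Delta^{\frac{m}{2}}\big((x-y)\cdot\nabla u\big)=X\big(\Delta^{\frac{m}{2}}u\big)+m\,\Delta^{\frac{m}{2}}u,$$
where in the odd case $X$ acts componentwise on the vector $\Delta^{m/2}u\in\R{2m}$ and the extra additive constant bookkeeping ($k=n$ plus one gradient, $2n+1=m$) produces exactly the coefficient $m$. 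Dotting with $\Delta^{m/2}u$ gives
$$\Delta^{\frac{m}{2}}v\cdot\Delta^{\frac{m}{2}}u=\big(X\Delta^{\frac{m}{2}}u\big)\cdot\Delta^{\frac{m}{2}}u+m\,|\Delta^{\frac{m}{2}}u|^2.$$

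For the left-hand side, I expand the divergence using the product rule: with $\phi:=|\Delta^{m/2}u|^2$,
$$\diver\big((x-y)\phi\big)=\phi\,\diver(x-y)+(x-y)\cdot\nabla\phi=2m\,\phi+X\phi,$$
since $\diver(x-y)=2m$ in $\R{2m}$. Now $X\phi=(x-y)\cdot\nabla|\Delta^{m/2}u|^2=2\,(X\Delta^{m/2}u)\cdot\Delta^{m/2}u$ by the chain rule (interpreting the dot product according to the parity convention). Hence
$$\tfrac12\diver\big((x-y)\phi\big)=m\,|\Delta^{\frac{m}{2}}u|^2+\big(X\Delta^{\frac{m}{2}}u\big)\cdot\Delta^{\frac{m}{2}}u,$$
which is precisely the expression obtained for the right-hand side. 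This proves the identity.

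The main obstacle I anticipate is purely notational rather than conceptual: making sure the convention \eqref{nablam} is handled consistently so that the commutator coefficient comes out to exactly $m$ in both parities, and that ``$\cdot$'' is read as the Euclidean inner product (odd $m$) or ordinary multiplication (even $m$) throughout — in particular in the step $X|\Delta^{m/2}u|^2=2(X\Delta^{m/2}u)\cdot\Delta^{m/2}u$. The regularity hypothesis $u\in C^{m+1}(\Omega)$ is exactly what is needed for all the derivatives appearing ($\Delta^{m/2}u$ involves $m$ derivatives, and $X\Delta^{m/2}u$ one more) to be classically defined, so no approximation argument is required.
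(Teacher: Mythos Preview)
Your proof is correct and follows essentially the same route as the paper's: both expand the divergence via the product rule to get $m|\Delta^{m/2}u|^2+(X\Delta^{m/2}u)\cdot\Delta^{m/2}u$ and then identify this with the right-hand side via the commutator relation $\Delta^{m/2}(Xu)=X\Delta^{m/2}u+m\Delta^{m/2}u$ (the paper derives the latter from Leibniz's rule, you from iterating $[\Delta,X]=2\Delta$). If anything, your write-up is slightly more complete, since the paper only spells out the even-$m$ case explicitly while you track the extra $+1$ from the gradient in the odd case.
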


\begin{proof} By a simple translation we can assume $y=0$. Let us first assume $m$ even. Then
\begin{eqnarray}
\frac{1}{2}\diver(x |\Delta^\frac{m}{2} u|^2)&=&m|\Delta^\frac{m}{2} u|^2+ \big[(x\cdot \nabla) \Delta^\frac{m}{2} u)\big]\cdot \Delta^\frac{m}{2} u \nonumber\\
&=&m(\Delta^\frac{m}{2}u+(x\cdot\nabla )\Delta^\frac{m}{2} u)\cdot \Delta^\frac{m}{2}u.\label{form0}
\end{eqnarray}
Observing that $D^2x=0$ and use the Leibniz's rule, we also get
\begin{eqnarray}
(x\cdot \nabla) \Delta^\frac{m}{2} u+m \Delta^{\frac{m}{2}}u&=&
(x\cdot \nabla) \Delta^\frac{m}{2} u+m\sum_{i,j=1}^{2m}\partial_{x^j}x^i\Delta^{\frac{m}{2}-1}\partial_{x_j}\partial_{x_i}u\nonumber\\
&=&\Delta^\frac{m}{2}(x\cdot \nabla u)\label{form1}
\end{eqnarray}
Inserting \eqref{form1} into \eqref{form0} we conclude.
\end{proof}

\begin{lemma}\label{poho} Let $u\in C^{m+1}(\overline{\Omega})$, $Q\in \R{}$ satisfy
$$(-\Delta)^m u=Qe^{2mu}$$ in $\Omega\subset \R{2m}$. Let $y\in\R{2m}$ be fixed. Then
\begin{eqnarray*}
\int_{\Omega}Qe^{2mu}dx&=&\frac{1}{2m}\int_{\partial \Omega}(x-y)\cdot\nu Qe^{2m u}d\sigma
-\frac{1}{2}\int_{\partial \Omega}(x-y)\cdot\nu |\Delta^{\frac{m}{2}} u|^2d\sigma\\
&& +\sum_{j=0}^{m-1}(-1)^{m+j+1}\int_{\partial \Omega}\nu\cdot \Big(\Delta^\frac{j}{2}((x-y)\cdot \nabla u) \Delta^{\frac{2m-1-j}{2}}u\Big) d\sigma.\\
\end{eqnarray*}
\end{lemma}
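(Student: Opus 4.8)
The identity is a higher-order Pohozaev identity obtained by pairing the equation $(-\Delta)^m u = Qe^{2mu}$ against the vector field generator $(x-y)\cdot\nabla u$ and integrating by parts. The plan is to start from
$$\int_\Omega Qe^{2mu}\,(x-y)\cdot\nabla u\,dx = \int_\Omega (-\Delta)^m u\;(x-y)\cdot\nabla u\,dx,$$
rewrite the left-hand side as a divergence, rewrite the right-hand side by iterating integration by parts $m$ times to move the Laplacian powers onto $(x-y)\cdot\nabla u$, and then collect all the boundary contributions. Without loss of generality I take $y=0$ by translation, exactly as in Lemma \ref{alg}.

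\textbf{Left-hand side.} Since $2m\,e^{2mu}(x\cdot\nabla u) = x\cdot\nabla(e^{2mu}) = \diver(x\,e^{2mu}) - 2m\,e^{2mu}$, we get $Qe^{2mu}(x\cdot\nabla u) = \tfrac{1}{2m}\diver(Q\,x\,e^{2mu}) - Qe^{2mu}$. Integrating over $\Omega$ and applying the divergence theorem produces
$$\int_\Omega Qe^{2mu}(x\cdot\nabla u)\,dx = \frac{1}{2m}\int_{\de\Omega}(x\cdot\nu)Qe^{2mu}\,d\sigma - \int_\Omega Qe^{2mu}\,dx,$$
which already accounts for the term $\int_\Omega Qe^{2mu}$ on the left of the claimed identity and for the first boundary term on the right.

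\textbf{Right-hand side.} Here I integrate by parts repeatedly, alternately moving one factor of $(-\Delta)$ off $(-\Delta)^m u$ and onto $(x\cdot\nabla u)$. Each step $\int_\Omega (-\Delta)^{m-\ell}\big[(-\Delta)^\ell\text{-something}\big]\cdots$ generates a boundary term of the form $\int_{\de\Omega}\nu\cdot\big(\Delta^{j/2}((x\cdot\nabla u))\,\Delta^{(2m-1-j)/2}u\big)d\sigma$ with alternating sign $(-1)^{m+j+1}$, for $j$ running from $0$ up to $m-1$; the alternation of sign is precisely what the Green-type formula $\int(-\Delta)v\,w = \int v(-\Delta)w + \int_{\de\Omega}(\nu\cdot\nabla v\, w - v\,\nu\cdot\nabla w)$ produces upon iteration, with the convention \eqref{nablam} absorbing the half-integer cases uniformly. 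After $m$ iterations the bulk term has become $\int_\Omega \Delta^{m/2}(x\cdot\nabla u)\cdot\Delta^{m/2}u\,dx$, and by Lemma \ref{alg} this equals $\tfrac12\int_\Omega\diver\big(x\,|\Delta^{m/2}u|^2\big)dx = \tfrac12\int_{\de\Omega}(x\cdot\nu)|\Delta^{m/2}u|^2\,d\sigma$. Carrying this term to the other side yields the $-\tfrac12\int_{\de\Omega}(x\cdot\nu)|\Delta^{m/2}u|^2$ contribution.

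\textbf{Conclusion and main obstacle.} Combining the two sides and replacing $x$ by $x-y$ gives the stated formula. The routine-but-delicate part — which I expect to be the main obstacle — is bookkeeping the boundary terms in the iterated integration by parts of the right-hand side: one must check that exactly the terms $j=0,\dots,m-1$ appear, that the sign is $(-1)^{m+j+1}$, and that the half-integer powers $\Delta^{j/2}$, $\Delta^{(2m-1-j)/2}$ (interpreted via \eqref{nablam} as gradients when the exponent is a half-integer) pair up correctly through the inner product of $\R{2m}$ in each integration by parts. The regularity hypothesis $u\in C^{m+1}(\overline\Omega)$ is exactly what is needed to justify all these integrations by parts and to make sense of $\Delta^{m/2}u$ and $\Delta^{(2m-1)/2}u$ on $\de\Omega$. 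I would organize the induction on $m$ (or on the number of integrations by parts performed), using Lemma \ref{alg} as the base identity that terminates the process.
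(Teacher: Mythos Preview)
Your approach is correct and essentially identical to the paper's: pair the equation with $(x-y)\cdot\nabla u$, rewrite the exponential side as a divergence to produce the $\tfrac{1}{2m}\int_{\de\Omega}(x-y)\cdot\nu\,Qe^{2mu}$ term and the bulk $\int_\Omega Qe^{2mu}$, integrate the polyharmonic side by parts $m$ times to reach $\int_\Omega \Delta^{m/2}((x-y)\cdot\nabla u)\cdot\Delta^{m/2}u$ plus the boundary sum, and then invoke Lemma~\ref{alg} to convert that last bulk term into the remaining boundary integral. The only caveat is your remark that $u\in C^{m+1}(\overline\Omega)$ is ``exactly'' enough: in fact $\Delta^{(2m-1)/2}u$ on $\de\Omega$ requires $2m-1$ derivatives, so the regularity stated in the lemma (and in the paper) is really a shorthand for sufficient smoothness rather than a sharp hypothesis.
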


\begin{proof} The proof is a pretty straightforward application of integration by parts. We have

$$\int_{\partial \Omega}(x-y)\cdot \nu Qe^{2mu}d\sigma=\int_{\Omega} 2me^{2mu}Qdx+\int_{\Omega}2m((x-y)\cdot\nabla u)e^{2mu }Qdx,$$
since both sides are equal to $\int_{\Omega}\diver((x-y)e^{2mu})Qdx$. Then we use

\begin{eqnarray*}
\int_{\Omega}(x-y)\cdot\nabla ue^{2mu }Qdx&=&(-1)^m \int_{\Omega}(x-y)\cdot \nabla u\Delta^m u dx\\
&=&\int_{\Omega}\underbrace{\Delta^\frac{m}{2}((x-y)\cdot \nabla u)\Delta^{m}{2} u}_{=\frac{1}{2}\diver ((x-y)|\Delta^\frac{m}{2} u|^2)}dx +\int_{\de \Omega} fd\sigma,\\
\end{eqnarray*}
where
$$f(x):=\sum_{j=0}^{m-1}(-1)^{m+j} \nu\cdot\left(\Delta^\frac{j}{2}\left((x-y)\cdot \nabla u(x)\right)\Delta^\frac{2m-1-j}{2}u (x)\right),\quad x\in\partial\Omega.$$
Moreover
$$
\frac{1}{2}\int_{\Omega}\diver((x-y)|\Delta^\frac{m}{2} u|^2)dx=\frac{1}{2}\int_{\partial \Omega} (x-y)\cdot \nu|\Delta^\frac{m}{2} u|^2d\sigma.
$$
Summing together we conclude.
\end{proof}

%\newpage

\end{document}